\documentclass[a4paper,reqno,12pt]{amsart}
\usepackage[english]{babel}
\usepackage[T1]{fontenc}
\usepackage[utf8]{inputenc}
\usepackage{mathptmx,amsmath,amssymb}
\usepackage{fullpage,microtype}
\usepackage{enumitem}
\usepackage[colorlinks=true,linkcolor=blue,citecolor=blue,urlcolor=blue]{hyperref}
\hypersetup{pdftitle={Minimal m-subharmonic functions with nonmaximal Hessian measures},pdfauthor={Thai Duong Do and Van Thien Nguyen}}
\newtheorem{theorem}{Theorem}[section]
\newtheorem{proposition}[theorem]{Proposition}
\newtheorem{lemma}[theorem]{Lemma}
\newtheorem{corollary}[theorem]{Corollary}
\theoremstyle{definition}

\theoremstyle{remark}
\newtheorem{remark}[theorem]{Remark}
\numberwithin{equation}{section}
\newcommand{\C}{\mathbb C}
\newcommand{\R}{\mathbb R}
\newcommand{\D}{\mathbb D}
\newcommand{\E}{\mathcal E}
\newcommand{\F}{\mathcal F}
\newcommand{\PSH}{\operatorname{PSH}}
\newcommand{\SH}{\operatorname{SH}}
\newcommand{\supp}{\operatorname{supp}}
\newcommand{\ddc}{dd^c}

\newcommand{\vol}{\operatorname{Vol}}
\newcommand{\one}{\mathbf 1}
\setlist[enumerate]{label=\textup{(\roman*)},leftmargin=*,itemsep=3pt}
\allowdisplaybreaks[1]

\begin{document}
\author{Thai Duong Do\textit{$^{1,2}$}}
\address{$^{1}$Institute for Artificial Intelligence, VNU University of Engineering and Technology, Hanoi, Vietnam.}
\address{$^{2}$Department of Mathematics, National University of Singapore, 10 Lower Kent Ridge Road, 119076, Singapore.}
\email{dtduong@vnu.edu.vn, dtduong@nus.edu.sg}
\author{Van Thien Nguyen\textit{$^{3}$}}
\address{$^{3}$FPT University, Education zone, Hoa Lac high tech park, Km29 Thang Long highway, Thach That ward, Hanoi, Viet Nam.}
\email{thiennv15@fe.edu.vn}
\title[Minimal functions and nonmaximal measures]{Minimal $m$-subharmonic functions with nonmaximal Hessian measures}
\subjclass[2020]{32U15, 32W20, 31C45}
\keywords{Complex Hessian operator, Cegrell classes, minimal functions, maximal measures, partial pluricomplex energy}

\begin{abstract}
For every $2\le m\le n$, we construct H\"older continuous functions on the closed unit ball that are minimal in the Cegrell class $\F_m$, although their Hessian measures are not maximal in the $m$-subharmonic ordering. This answers a question posed by the second author. We also obtain a minimality criterion based on partial pluricomplex energy and an explicit family of Monge--Amp\`ere examples on a product domain. For $m=1$, minimality and maximality are equivalent on a ball.
\end{abstract}
\maketitle
\enlargethispage{2pt}

\section{Introduction and main results}

The complex Hessian operators form a family between the Laplacian and the complex Monge--Amp\`ere operator. Their admissible functions are the $m$-subharmonic functions, with
\[
\PSH(\Omega)=\SH_n(\Omega)\subset\cdots\subset\SH_1(\Omega)=\SH(\Omega),
\]
where $\Omega\subset\C^n$ is a domain. We use the normalization
\begin{equation}\label{normalization}
d^c=\frac{i}{2\pi}(\bar\partial-\partial),\qquad
\ddc=\frac{i}{\pi}\partial\bar\partial,
\end{equation}
so that $\ddc\log|\zeta|=\delta_0$ in one variable. Set $\beta=\ddc|z|^2$ and
\[
H_m(u)=(\ddc u)^m\wedge\beta^{n-m}.
\]
A superscript $-$ denotes nonpositive functions, and all hyperconvex domains in this paper are bounded.

Bedford--Taylor theory~\cite{BT82} defines the Monge--Amp\`ere operator for locally bounded plurisubharmonic functions. B\l ocki~\cite{Bl05} developed the corresponding theory for complex Hessian operators. For unbounded functions, Cegrell's classes~\cite{Ce98,Ce04} and their Hessian counterparts studied by Lu~\cite{Lu15} allow the operators to be defined by decreasing approximation. The classes $\F_m$ and $\E_{1,m}$ control total Hessian mass and energy, respectively. We recall their definitions in Section~\ref{sec:hessian}. When $m=n$, the subscript $m$ is omitted.

Several aspects of these classes are relevant to the relation between functions and their Hessian measures. The second author studied vector spaces generated by finite-energy $m$-subharmonic functions in~\cite{Ng16}, and Czy\.{z} and the second author examined the constants in mixed energy estimates in~\cite{CN17}. The class $\mathcal N_m$, studied in~\cite{Ng19}, describes functions whose smallest $m$-maximal majorant is zero. Boundary behavior of functions in $\F_m$ was investigated by Nguyen Quang Dieu and the first author~\cite{ND22}, who obtained estimates for the volumes of sublevel sets and sufficient conditions for membership in $\F_m$. The present authors studied weighted $m$-energy classes, their relation to Cegrell classes, and associated integration and convergence results in~\cite{DN23}.

We consider a question about order and total mass. Pointwise inequalities between functions in $\F_m$ need not give inequalities between their Hessian measures as measures. They do give inequalities after integration against negative $m$-subharmonic functions~\cite[Proposition~2]{Ng18}. This leads to the following ordering.

Bengtson~\cite{Be17} introduced an ordering of measures using negative plurisubharmonic test functions. The second author extended this ordering to $m$-subharmonic functions in~\cite{Ng18}. Let $\Omega$ be a bounded $m$-hyperconvex domain. For finite positive Borel measures on $\Omega$, set
\begin{equation}\label{order}
\nu\succeq_m\mu
\quad\Longleftrightarrow\quad
\int_\Omega(-\varphi)\,d\nu\ge
\int_\Omega(-\varphi)\,d\mu
\quad\text{for all }\varphi\in\E_{0,m}(\Omega)\cap C(\Omega).
\end{equation}
For $m=n$ we abbreviate $\succeq_n$ to $\succeq$. A finite positive measure $\mu$ is \emph{maximal} if
\[
\nu\succeq_m\mu,\qquad \nu(\Omega)=\mu(\Omega)
\quad\Longrightarrow\quad \nu=\mu.
\]
A function $u\in\F_m(\Omega)$ is \emph{minimal} if, for every $v\in\F_m(\Omega)$,
\[
v\le u,\qquad \int_\Omega H_m(v)=\int_\Omega H_m(u)
\quad\Longrightarrow\quad v=u.
\]
Here maximality refers to the order on measures; it is different from the usual notion of an $m$-maximal function used in~\cite{Ng19}. Nilsson and Wikstr\"om~\cite{NW25} studied related plurisubharmonic orderings through Edwards' duality theorem, using test classes that impose equality of total mass. In the present paper, this equality is included explicitly in both definitions.

By~\cite[Proposition~5]{Ng18}, maximality of $H_m(u)$ implies minimality of $u\in\F_m$. The converse holds in one complex dimension~\cite[Proposition~4.11]{Be17}. At the end of~\cite{Ng18}, the second author asked whether it remains true in higher dimension. Our first result gives a negative answer for every nonlinear order $2\le m\le n$, including the Monge--Amp\`ere case.

Let $B$ be the unit ball in $\C^n$, with coordinates $z=(\zeta,w)\in\C\times\C^{n-1}$, and put $\mathcal H=\{\operatorname{Im}\zeta=0\}\cap B$.

Let $q=n-1$, let $b_q=\pi^q/q!$ be the volume of the unit ball in $\C^q$, and define the probability measures
\begin{equation}\label{hessian-family-measures}
d\lambda_0(x)=2\one_{[-1/4,1/4]}(x)\,dx,\qquad
d\tau_r(w)=\frac{\one_{\{|w|<r\}}}{b_qr^{2q}}\,dV(w),\qquad
\mu_r=\lambda_0\otimes\tau_r,
\end{equation}
where $0<r<1/2$ and $\lambda_0$ is regarded as a measure on the real axis in the $\zeta$ plane. Each $\mu_r$ is compactly supported in $B\cap\mathcal H$ and has mass one.

\begin{theorem}\label{thm:hessian-counterexamples}
Let $2\le m\le n$. For every $0<r<1/2$, there is a unique
\[
u_r\in\E_{0,m}(B)\cap C(\overline B),\qquad
H_m(u_r)=\mu_r.
\]
The function $u_r$ is H\"older continuous on $\overline B$ and minimal in $\F_m(B)$. If $0<s<r<1/2$, then
\begin{equation}\label{hessian-chain}
H_m(u_s)=\mu_s\succeq_m\mu_r=H_m(u_r),\qquad
\mu_s\ne\mu_r.
\end{equation}
Consequently, every $u_r$ is a bounded continuous minimal function whose Hessian measure is not maximal. The competing measures in~\eqref{hessian-chain} also arise from bounded continuous minimal functions. Distinct functions in the family are incomparable in pointwise order.
\end{theorem}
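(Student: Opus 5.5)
The plan has four parts: solve the Dirichlet problem, compare the measures $\mu_s$ and $\mu_r$, prove minimality, and then read off the consequences.

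\textbf{Existence, uniqueness and H\"older continuity.} First I would bound the density of $\mu_r$. Since $\lambda_0$ and $\tau_r$ have bounded densities, $\mu_r\le C_r\,\sigma$, where $\sigma$ is the $(2n-1)$-dimensional Hausdorff measure on the real hypersurface $\mathcal H$. Consequently $\mu_r(B(z,t))\le C t^{2n-1}$ for all $z$ and $t$. Because $2n-1>2n-2m$ whenever $m\ge1$, the measure has the growth $t^{2n-2m+\varepsilon}$ that appears in the known H\"older criteria for the complex Hessian equation (Ko\l odziej-type results as extended to $\SH_m$ by Nguyen and Charabati). These criteria give a H\"older continuous solution of $H_m(u)=\mu_r$ on $B$ with $u=0$ on $\partial B$.

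Membership in $\E_{0,m}(B)$ follows because the solution is bounded, vanishes on the boundary, and $\mu_r$ has finite mass. Uniqueness follows from the comparison principle for bounded $m$-subharmonic functions with equal boundary values. I would also record two symmetries, both consequences of uniqueness: $u_r$ is invariant under unitary maps of the $w$-variable and under $\zeta\mapsto\bar\zeta$.

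\textbf{The chain \eqref{hessian-chain}.} Let $\varphi\in\E_{0,m}(B)\cap C(B)$ and fix $x\in[-1/4,1/4]$. The restriction of an $m$-subharmonic function to a complex affine subspace of dimension $n-1$ is $(m-1)$-subharmonic. Since $m\ge2$, the function $w\mapsto\varphi(x,w)$ is therefore subharmonic on $\{|w|<1/2\}$; this is exactly where $m\ge 2$ is used. By the sub-mean value property, the ball averages $\int\varphi(x,\cdot)\,d\tau_t$ increase in $t$. Hence
\[
\int(-\varphi)(x,\cdot)\,d\tau_s\ \ge\ \int(-\varphi)(x,\cdot)\,d\tau_r \qquad (s<r),
\]
and integrating in $x$ against $\lambda_0$ gives $\mu_s\succeq_m\mu_r$. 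Both measures have mass one and clearly differ, so $\mu_r$ is not maximal.

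\textbf{Minimality, the main obstacle.} Let $v\in\F_m(B)$ with $v\le u_r$ and $\int H_m(v)=1$. By \cite[Proposition~2]{Ng18}, $\nu:=H_m(v)$ satisfies $\nu\succeq_m\mu_r$. The goal is to upgrade this to $v=u_r$.

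The route I would try first is an energy characterization of equality. By the mixed-energy (H\"older-type) inequalities, the identity
\[
\int(-u_r)\,H_m(v)=\int(-u_r)\,H_m(u_r),
\]
combined with $v\le u_r$ and equal masses, should force $v=u_r$. This is the partial-pluricomplex-energy criterion announced in the abstract. So I must exclude $\int(-u_r)\,d\nu>\int(-u_r)\,d\mu_r$.

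For that I would exploit the structure of $u_r$. Since $H_m(u_r)$ vanishes off $\mathcal H$ and $u_r$ is symmetric under $\zeta\mapsto\bar\zeta$, I expect $u_r$ to be maximal in the $\operatorname{Im}\zeta$ direction near $\supp\mu_r$. I would then integrate by parts $\int(u_r-v)\,H_m(v)$, writing $H_m(v)-H_m(u_r)$ as $\ddc(v-u_r)\wedge T$ with $T$ a mixed current, so that the boundary contributions vanish in $\F_m$. Only the positive part of this difference, concentrated on $\mathcal H$, should survive. Making this step rigorous is where I expect the real difficulty, and the symmetry of $u_r$ is the tool I would lean on.

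\textbf{Consequences.} Each $u_r$ is bounded, continuous and minimal, while $\mu_r$ is not maximal because $\mu_s\succeq_m\mu_r$ for any $s<r$. The competing measures $\mu_s$ in \eqref{hessian-chain} are themselves of the form $H_m(u_s)$ with $u_s$ bounded, continuous and minimal. For incomparability, suppose $u_s\le u_r$. Both functions lie in $\F_m(B)$ with total mass one, so minimality of $u_r$ gives $u_s=u_r$, contradicting $\mu_s\ne\mu_r$. The same argument excludes $u_r\le u_s$.
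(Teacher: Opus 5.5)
\textbf{Verdict: there is a genuine gap.} Your treatment of the chain $\mu_s\succeq_m\mu_r$, of uniqueness, and of incomparability matches the paper. However, minimality, which is the heart of the theorem, is not proved, and the route you sketch cannot work.

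You never establish $\int(-u_r)\,d\nu=\int(-u_r)\,d\mu_r$. The symmetry and integration-by-parts sketch does not give it, and $u_r$ is not known to be constant on $\supp\mu_r$, so nothing saturates. Worse, the principle you want to apply afterwards is false. On $B$ with $m=n$, take $u=\max\{\log|z|,\log\rho\}$ and $v=\log|z|$. Then $v\le u$, the two functions agree near $\partial B$ and so have equal mass, and $\int(-u)(\ddc v)^n=-\log\rho=\int(-u)(\ddc u)^n$, yet $v\ne u$. Integrating by parts against $-u$ only produces terms containing $\ddc u$, and $\ddc u$ vanishes exactly where $u>v$.

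The missing idea is a test function that attains its minimum on all of $\supp\mu_r$ and has rank-one $\ddc$. Take $\psi_A=\max\{(\operatorname{Im}\zeta)^2-1,\,A(|z|^2-1)\}$. It equals $-1$ on $\supp\mu_r$ for large $A$, and $\ddc\psi_A=\tfrac12\beta_\zeta$ on the open set where the first entry is larger.
\begin{itemize}
\item Since $-\psi_A\le1$ and the masses agree, the inequality $\int(-\psi_A)H_m(v)\ge\int(-\psi_A)H_m(u_r)$ is an equality.
\item Telescoping integration by parts then forces each nonnegative term $\int(u_r-v)\,\ddc\psi_A\wedge(\ddc v)^j\wedge(\ddc u_r)^{m-1-j}\wedge\beta^{n-m}$ to vanish.
\item The term $j=m-1$, with $A\to\infty$, gives $\int_B(u_r-v)\,\beta_\zeta\wedge(\ddc v)^{m-1}\wedge\beta^{n-m}=0$.
\item Disintegrate over the slices $B_\zeta$, on which $m$-subharmonic functions are $(m-1)$-subharmonic. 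This yields $\int_{B_\zeta}(u_\zeta-v_\zeta)\,H^w_{m-1}(v_\zeta)=0$ for almost every $\zeta$, now against the Hessian measure of the \emph{smaller} function.
\item The comparison principle on $B_\zeta$ gives $v_\zeta=u_\zeta$, hence $v=u_r$.
\end{itemize}
The paper's partial-energy criterion is a separate $m=n$ tool for product domains and plays no role here.

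Your H\"older argument also rests on a false criterion. Growth $\mu(B(z,t))\le Ct^{2n-2m+\varepsilon}$ does not suffice when $m\ge2$. Area measure on a piece of the complex hyperplane $\{z_1=0\}$ grows like $t^{2n-2}$, but it charges a pluripolar, hence $m$-polar, set, so it is not $H_m$ of any bounded function.

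The paper instead uses the Lipschitz subsolution $b=\max\{|\operatorname{Im}\zeta|+|z|^2-3,\,8(|z|^2-1)\}$. Near $\supp\mu_r$ one has $H_m(b)=\beta^n+m\,\ddc|\operatorname{Im}\zeta|\wedge\beta^{n-1}$, because the rank-one current $\ddc|\operatorname{Im}\zeta|$ has zero square. The second term is a positive multiple of $dx\otimes\delta_0(d\operatorname{Im}\zeta)\otimes dV(w)$. Hence $\mu_r\le H_m(a_rb)$ for a constant $a_r$. Existence and uniqueness in $\E_{0,m}(B)$ then follow from Lu's subsolution lemma, and H\"older continuity from the H\"older continuous subsolution theorem of Benali--Zeriahi.
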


The proof uses a minimality criterion for continuous potentials whose Hessian measures have compact support in $\mathcal H$. A rank-one test function turns the order inequality into equality of potentials on almost every complex hyperplane. An explicit Lipschitz subsolution supplies the potentials in Theorem~\ref{thm:hessian-counterexamples}. In~\cite[Theorem~10]{Ng18}, minimality was established when the Hessian measure is carried by an $m$-polar set. The measures in our construction instead vanish on all $m$-polar sets, as do the competing measures in~\eqref{hessian-chain}. For $m=1$, minimality and maximality are equivalent on a ball; see Proposition~\ref{prop:linear}.

We also give a second construction for $m=n$. Its potentials are explicit, which allows us to study uniform variation, $L^p$ limits, and all possible dominating measures. The main tool for this construction is partial pluricomplex energy. For hyperconvex domains $D\subset\C$ and $G\subset\C^q$, and $U\in\F(D\times G)$, define at every $z\in D$
\begin{equation}\label{partial-def-intro}
P_U(z)=\int_{D\times G}g_D(z,\zeta)(\ddc U(\zeta,w))^{q+1},
\end{equation}
where $g_D$ is the negative Green function with $\ddc_z g_D(z,\zeta)=\delta_\zeta$. Theorem~\ref{thm:partial} identifies this function with the energy of the slice wherever it is finite.

\begin{theorem}\label{thm:criterion-intro}
Let $D\subset\C$ and $G\subset\C^q$, $q\ge1$, be hyperconvex domains. If $U\in\F(D\times G)$ and $P_U$ is minimal in $\F(D)$, then $U$ is minimal in $\F(D\times G)$. In particular, $U$ is minimal whenever
\[
\pi_*\bigl((\ddc U)^{q+1}\bigr)
\]
is a maximal measure on $D$, where $\pi(z,w)=z$.
\end{theorem}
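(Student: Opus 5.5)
The plan is to pass from $D\times G$ to $D$ with the rank-one test functions $(\zeta,w)\mapsto g_D(z,\zeta)$, and then come back through the slice interpretation of $P_U$ in Theorem~\ref{thm:partial}. Fix $V\in\F(D\times G)$ with $V\le U$ and $\int(\ddc V)^{q+1}=\int(\ddc U)^{q+1}$. We must show $V=U$.

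\textbf{Step 1: $P_U$ and $P_V$ lie in $\F(D)$.} Put $\mu_U=\pi_*((\ddc U)^{q+1})$. This is a finite positive measure on $D$ whose mass is the total Hessian mass of $U$. By Fubini,
\[
P_U(z)=\int_D g_D(z,\zeta)\,d\mu_U(\zeta)
\]
is the Green potential of $\mu_U$. In one variable such a potential of a finite measure lies in $\F(D)$, with $\ddc P_U=\mu_U$. Hence $\int_D\ddc P_U=\int(\ddc U)^{q+1}$, and likewise for $V$. So the two potentials $P_U,P_V$ have equal total mass. This step also proves the ``in particular'' clause. If $\mu_U$ is maximal, then $P_U$ is minimal in $\F(D)$ by \cite[Proposition~5]{Ng18} applied on $D$ (the case $n=m=1$). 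The first part of the theorem then applies.

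\textbf{Step 2: $P_V\le P_U$.} For fixed $z\in D$, the function $\varphi_z(\zeta,w)=g_D(z,\zeta)$ is negative and plurisubharmonic on $D\times G$, since it depends on $\zeta$ only. Since $V\le U$ in $\F(D\times G)$, \cite[Proposition~2]{Ng18} gives
\[
\int(-\varphi_z)(\ddc V)^{q+1}\ge\int(-\varphi_z)(\ddc U)^{q+1}.
\]
If that proposition is stated only for test functions in $\E_0\cap C$, I would extend it to $\varphi_z$ by monotone approximation from above, $\max(\varphi_z,-k)$, followed by a standard regularization. The displayed inequality is exactly $P_V(z)\le P_U(z)$. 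By Step 1 and the minimality of $P_U$ in $\F(D)$, we obtain $P_V=P_U$ on $D$.

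\textbf{Step 3: from $P_V=P_U$ back to $V=U$.} This is the main obstacle, because equality of pushed-forward energies does not immediately give equality of functions. Here I would invoke Theorem~\ref{thm:partial}. Wherever $P_U(z)$ is finite, it equals the ($q$-dimensional Monge--Amp\`ere) energy of the slice $U(z,\cdot)\in\F(G)$, and similarly for $V$. Since $P_U\in\F(D)$ is subharmonic and not identically $-\infty$, it is finite off a polar set, so in particular for a.e. $z\in D$. For such $z$ the slices satisfy
\[
V(z,\cdot)\le U(z,\cdot),
\]
both have finite energy, and their energies coincide. I would then use the Cegrell-type energy rigidity: if $v\le u$ are of finite energy in $\F(G)$ and $e(v)=e(u)$, then $v=u$. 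This follows by expanding $e(v)-e(u)$ with integration by parts into a sum of nonnegative mixed terms, the first of which controls $\int(u-v)(\ddc u)^q$ and gradient terms of $u-v$. Some care is needed to check that the slices really belong to the finite-energy class where this rigidity holds; I expect that is part of the content of Theorem~\ref{thm:partial}. The conclusion is $V(z,\cdot)=U(z,\cdot)$ for a.e. $z$. Thus $V=U$ almost everywhere on $D\times G$, and hence everywhere, since both functions are plurisubharmonic.
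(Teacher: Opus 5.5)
Your proposal is correct, and its architecture matches the paper's. You show $P_V\le P_U$, use minimality of $P_U$ and equality of projected masses to get $P_V=P_U$, and return to slices through Theorem~\ref{thm:partial} and the strict energy inequality (Lemma~\ref{lem:strict-energy}).

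\textbf{Step~2 is a different route.} The paper proves $P_V\le P_U$ on the slice side. Off the polar set where $P_U$ or $P_V$ equals $-\infty$, both slices lie in $\E_1(G)$. The monotone half of Lemma~\ref{lem:strict-energy} gives the inequality there, and disc averages extend it to every point. You instead test the order inequality \cite[Proposition~2]{Ng18} against the $w$-independent kernel $(\zeta,w)\mapsto g_D(z,\zeta)$. This gives $P_V(z)\le P_U(z)$ at every $z$ directly from~\eqref{partial-def-intro}, with $-\infty$ allowed. It is the same Green-kernel device the paper uses in Proposition~\ref{prop:linear}.

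Your version needs the order inequality for an unbounded test function that does not vanish on $D\times\partial G$. So the truncations $\max\{\varphi_z,-k\}$ are not themselves in $\E_0(D\times G)$. The clean justification is Cegrell's approximation of negative psh functions by decreasing sequences in $\E_0\cap C$, followed by monotone convergence; this is \cite[Remark~2(1)]{Ng18}. In exchange, your Step~2 needs neither the slice formula nor the averaging argument. The paper's route stays inside the slice-energy framework and avoids testing against unbounded kernels.

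\textbf{Two small points on Step~3.} First, Theorem~\ref{thm:partial} puts the slices in $\E_1(G)$, not necessarily in $\F(G)$. So state the rigidity for $\E_1(G)$, as Lemma~\ref{lem:strict-energy} does.

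Second, in your sketch of that rigidity, the decisive term is $\int_G(u-v)(\ddc v)^q$, taken against the measure of the smaller function. Equality of energies forces this term to vanish, and the comparison principle then gives $v=u$ (Lemma~\ref{lem:ordered-zero}). Vanishing of $\int_G(u-v)(\ddc u)^q$ alone is not enough. On the unit disc, $u=\max\{\log|w|,-1\}$ and $v=\max\{\log|w|,-2\}$ satisfy it with $v\ne u$. Your gradient terms bridge the gap: added to $\int_G(u-v)(\ddc u)^q$, they reproduce $\int_G(u-v)(\ddc v)^q$. The conclusion should be drawn from that term, not from the $(\ddc u)^q$ term.
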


We next describe the explicit family. Fix $n\ge2$, and put $q=n-1$ and $c=\log2$. Set
\begin{equation}\label{ellipse}
E=\left\{x+iy\in\C:
\frac{x^2}{(5/4)^2}+\frac{y^2}{(3/4)^2}<1\right\},
\qquad \Omega=E\times\D^q,
\end{equation}
where $\D$ denotes the unit disc. On $K=[-1,1]$, let $\lambda$ be the arcsine probability measure
\begin{equation}\label{lambda}
d\lambda(x)=\frac{\one_{(-1,1)}(x)}{\pi\sqrt{1-x^2}}\,dx,
\end{equation}
viewed as a measure on $E$. Its logarithmic potential is
\begin{equation}\label{h-def}
h(z)=\int_{-1}^{1}\log|z-x|\,d\lambda(x).
\end{equation}
We shall verify that $h\in\E_0(E)\cap C(\overline E)$, $\ddc h=\lambda$, $h=-c$ on $K$, and $h=0$ on $\partial E$.

For $t>0$, define
\begin{equation}\label{Ut}
U_t(z,w)=\max\left\{
t^{q/n}h(z),\ t^{-1/n}\log|w_2|,\ldots,
t^{-1/n}\log|w_n|
\right\}.
\end{equation}
We use $\log0=-\infty$. For $0<r<1$, let $\sigma_r$ be normalized arclength measure on $\{|\zeta|=r\}$.

\begin{theorem}\label{thm:family}
For every $n\ge2$, the functions in~\eqref{Ut} satisfy the following properties.
\begin{enumerate}
\item For every $t>0$, $U_t\in\E_0(\Omega)\cap C(\overline\Omega)$, $U_t=0$ on $\partial\Omega$, and
\[
-c\,t^{q/n}\le U_t<0\quad\text{on }\Omega.
\]
\item Each $U_t$ is minimal in the whole class $\F(\Omega)$, and
\begin{equation}\label{mut}
\mu_t:=(\ddc U_t)^n
=\lambda\otimes\sigma_{2^{-t}}^{\otimes q},\qquad
\mu_t(\Omega)=1.
\end{equation}
\item If $0<t_1<t_2$, then
\[
\mu_{t_2}\succeq\mu_{t_1},\qquad \mu_{t_2}\ne\mu_{t_1}.
\]
Consequently, no $\mu_t$ is maximal, even when competing measures are required to be Monge--Amp\`ere measures of bounded continuous minimal functions in $\E_0(\Omega)$.
\item Distinct functions in the family are not comparable in pointwise order. Moreover, $t\mapsto U_t$ is continuous from $(0,\infty)$ into $C(\overline\Omega)$ equipped with the uniform norm.
\end{enumerate}
\end{theorem}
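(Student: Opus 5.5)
We treat the four items in order, relying mainly on the product structure of $U_t$ and on Theorem~\ref{thm:criterion-intro}.

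\emph{Item (i).} We first verify the stated properties of $h$. Since $\lambda$ is the equilibrium measure of $K=[-1,1]$ and $\operatorname{cap}(K)=1/2$, the function $h+c$ is the Green function of $\C\setminus K$ with pole at infinity. Writing $z=\tfrac12(\xi+\xi^{-1})$ with $|\xi|>1$, we get $h(z)=\log|\xi|-c$. The level set $\{|\xi|=2\}$ is the ellipse with semi-axes $\tfrac12(2+\tfrac12)=5/4$ and $\tfrac12(2-\tfrac12)=3/4$, which is exactly $\partial E$. Hence $h=0$ on $\partial E$, $h=-c$ on $K$, $\ddc h=\lambda$, and $h$ is continuous.

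On $\partial\Omega$, either $z\in\partial E$, so $h=0$, or some $|w_j|=1$, so the corresponding $\log|w_j|=0$. All entries of the max are $\le 0$, so $U_t=0$ on $\partial\Omega$. On $\Omega$, every entry is negative, which gives $U_t<0$. The lower bound holds because $U_t\ge t^{q/n}h\ge -c\,t^{q/n}$. Continuity on $\overline\Omega$ follows since $U_t$ is a maximum of continuous functions with values in $[-\infty,0]$, one of which is bounded. Therefore $U_t\in\E_0(\Omega)\cap C(\overline\Omega)$.

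\emph{Item (ii), the measure.} We compute $(\ddc U_t)^n$ by the standard formula for maxima of functions in separate variables. The function $\max\{a h(z),\,b\log|w_2|,\dots,b\log|w_n|\}$ has Monge--Amp\`ere measure carried by the set where all entries are equal. Approximating $h$ by smooth functions, or simply applying the product formula $(\ddc\max(\phi_1(z_1),\dots,\phi_n(z_n)))^n$, the measure equals $\ddc(a h)$ restricted to the region where $ah$ equals a common level, times the uniform measures on the circles $\{b\log|w_j|=\ell\}$, integrated over levels $\ell$. On $\operatorname{supp}\lambda$ we have $ah=-ac$, a single level. With $a=t^{q/n}$ and $b=t^{-1/n}$ this forces $\log|w_j|=-ac/b=-tc$, that is $|w_j|=2^{-t}$. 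The total mass is $a\cdot b^{q}=t^{q/n}t^{-q/n}=1$. This yields $\mu_t=\lambda\otimes\sigma_{2^{-t}}^{\otimes q}$.

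\emph{Item (ii), minimality.} Here $\pi_*\mu_t=\lambda$, and we apply Theorem~\ref{thm:criterion-intro}. We must show that $\lambda$ is maximal on $E$, or at least that $P_{U_t}=g$-potential of $\lambda$ is minimal in $\F(E)$; the latter equals $h$, the Green potential $\int g_E(z,\zeta)\,d\lambda$, since $h\in\E_0$ and $\ddc h=\lambda$. By Bengtson's one-variable result, minimality of $h$ is equivalent to maximality of $\lambda$, so it suffices to prove $h$ minimal. Let $v\le h$ with $v\in\F(E)$ and $\int\ddc v=1$. Since $h+c$ is the extremal Green function, $v\ge -c$ would give $v=h$. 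I expect this to be the main obstacle. The plan is to use that $h$ is constant on $\operatorname{supp}\lambda=K$ and attains the minimal value there, together with the comparison principle and a Riesz/energy argument. Write $v=h+\psi$ with $\psi\le0$ and $\ddc\psi$ of total mass zero, so that $\psi$ is harmonic off $\operatorname{supp}\ddc v\cup K$ and has zero boundary values in the $\F$ sense. Integrating $-h$ against $\ddc v-\lambda$ then gives $\int(-h)\,d(\ddc v)\le c$, whereas $\int(-v)\,d\lambda\ge c$. Symmetry of mutual energy, $\int(-v)\,d\lambda=\int(-h)\,d(\ddc v)$, forces equality. Hence $v=-c$ $\lambda$-a.e., so $v=h$ on $K$ quasi-everywhere. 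Then $v$ and $h$ coincide as Green potentials by domination and uniqueness.

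\emph{Item (iii).} For test functions $\varphi\in\E_0\cap C$, the function $w\mapsto\varphi(x,w)$ is plurisubharmonic. By subharmonicity in each $w_j$ separately, its circle averages over $|w_j|=\rho$ increase in $\rho$. The radius $2^{-t_2}$ is smaller than $2^{-t_1}$, so
\[
\int(-\varphi)\,d\mu_{t_2}\ge\int(-\varphi)\,d\mu_{t_1},
\]
and the measures are clearly distinct.

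\emph{Item (iv).} Incomparability follows by comparing values: on $K\times\{0\}$ the larger $t$ gives the smaller value $-ct^{q/n}$, while near $\{z\in\partial E\}\times\{|w_j|\ \text{small}\}$ the ordering reverses because the exponent $t^{-1/n}$ decreases in $t$. Uniform continuity of $t\mapsto U_t$ follows since $\max$ is $1$-Lipschitz. The term $t^{q/n}h$ varies uniformly, and on the region where $\log|w_j|$ is relevant ($U_t\ge -ct^{q/n}$) the factor $\log|w_j|$ is bounded, so $t^{-1/n}\log|w_j|$ truncated at $-C$ also varies uniformly.
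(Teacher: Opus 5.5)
Your overall architecture is the paper's. You compute the measure by a product formula, get minimality from Theorem~\ref{thm:criterion-intro}, prove (iii) with monotone circle means, and get uniform continuity by truncating the logarithms. Identifying $P_{U_t}$ directly as the Green potential of $\pi_*\mu_t=\lambda$, hence $h$, is fine and slightly quicker than the slice computation. For the measure, the rigorous form of your ``integrated over levels'' heuristic is to replace $\log|w_j|$ by $\max\{\log|w_j|,-ct\}$. This does not change $U_t$, because $t^{q/n}h\ge -ct^{q/n}$, and it puts you in the setting of Lemma~\ref{lem:product}. Two steps, however, do not go through as written.

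\textbf{Minimality of $h$.} You rightly flag this as the crux. Your chain $c\le\int(-v)\,d\lambda=\int(-h)\,\ddc v\le c$ is correct, but you only extract $v=h$ $\lambda$-a.e. from it. The next two moves, ``so $v=h$ q.e.\ on $K$'' and ``by domination and uniqueness'', are not arguments.
\begin{enumerate}
\item A $\lambda$-null set need not be polar. This part can be rescued: $\{v\ge -c\}$ is closed by upper semicontinuity and contains $\lambda$-almost every point of $K=\supp\lambda$. Hence $v=-c$ on all of $K$.
\item More seriously, equality of $v$ and $h$ on $K$ says nothing on $E\setminus K$ unless $\ddc v$ does not charge $E\setminus K$. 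Otherwise $v-h$ is merely subharmonic there, and a domination principle needs the inequality $\ddc v$-almost everywhere.
\end{enumerate}
The missing step is the other half of your equality. Since $\{h=-c\}=K$ by the explicit formula, $\int(c+h)\,\ddc v=0$ forces $\ddc v$ to be carried by $K$. Then $\int(h-v)\,\ddc v=\int_K(-c-v)\,\ddc v=0$, and Lemma~\ref{lem:ordered-zero} with $a=h$, $b=v$ gives $v=h$.

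With this repair your route is a valid alternative. It proves minimality of $h$ directly from its extremal property: $h\ge -c$, with equality exactly on $\supp\lambda$. The paper instead proves the stronger fact that $\lambda$ is maximal, then invokes \cite[Proposition~5]{Ng18}. It does so with the strictly subharmonic test function $(\operatorname{Im}z)^2-1$, which is saturated on $E\cap\R\supset K$ and admits two-sided perturbations \cite[Lemma~1]{Ng18}. Maximality of $\lambda$ is reused in Proposition~\ref{prop:marginal-fixed}. Your side remark that $v\ge -c$ ``would give $v=h$'' by extremality is unsupported; drop it.

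\textbf{Incomparability in (iv).} Your witness is wrong. Near $\partial E\times\{|w_j|\ \text{small}\}$ one has $h(z)\to 0$, so the first entry dominates and $U_t=t^{q/n}h(z)$. This decreases in $t$, so the ordering there is the same as on $K\times\{0\}$, not reversed. The reversal occurs where the logarithmic entries dominate. For example, take $z=x\in K$ and $w_2=\cdots=w_n=\rho$ with $2^{-t_1}<\rho<1$. Then $U_{t_i}(x,w)=t_i^{-1/n}\log\rho$, so $U_{t_2}>U_{t_1}$ there for $t_1<t_2$, while $U_{t_2}<U_{t_1}$ at $(x,0)$. Alternatively, use the paper's argument: if two members were comparable, minimality of the larger one and equal masses would force $U_{t_1}=U_{t_2}$, contradicting $\mu_{t_1}\ne\mu_{t_2}$.
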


The theorem distinguishes the two order structures even within this family of continuous minimal functions. In particular, the failure of maximality can be witnessed by functions arbitrarily close in the uniform norm. No measure charging a pluripolar set is needed as a competitor.

Section~\ref{sec:hessian} proves Theorem~\ref{thm:hessian-counterexamples} and treats the linear endpoint. Section~\ref{sec:prelim} develops the partial-energy criterion. Sections~\ref{sec:planar} and~\ref{sec:family} give the explicit Monge--Amp\`ere construction, and Section~\ref{sec:limits} studies its limits and dominating measures.

\section{Counterexamples for every nonlinear Hessian order}\label{sec:hessian}

We work on the unit ball $B\subset\C^n$ and write
\[
\beta=\beta_\zeta+\beta_w,\qquad
\beta_\zeta=\ddc|\zeta|^2,\qquad \beta_w=\ddc|w|^2.
\]
For a bounded $m$-hyperconvex domain $G\subset\C^d$, let $\E_{0,m}(G)$ consist of the bounded negative $m$-subharmonic functions with zero boundary values and finite total Hessian mass. The class $\F_m(G)$ consists of decreasing limits $v_j\searrow v$, with $v_j\in\E_{0,m}(G)$ and $\sup_j\int_G H_m(v_j)<\infty$. Replacing this bound by
\[
\sup_j\int_G(-v_j)H_m(v_j)<\infty
\]
defines $\E_{1,m}(G)$. Here $H_m$ is computed with the Euclidean form on $\C^d$. We use the approximation and mixed-product calculus of~\cite{Lu15,Ng18}.

The proof of Theorem~\ref{thm:hessian-counterexamples} uses slices of complex codimension one. It applies throughout $2\le m\le n$ and is independent of partial pluricomplex energy.

\subsection{Slicing and a minimality criterion}

For $|\zeta|<1$, put
\[
B_\zeta=\{w\in\C^{n-1}:|w|^2<1-|\zeta|^2\},\qquad
v_\zeta(w)=v(\zeta,w).
\]
For $k=m-1$, the Hessian operator on this slice is
\[
H^{w}_{m-1}(a)=(\ddc_w a)^{m-1}\wedge\beta_w^{n-m}.
\]

\begin{lemma}\label{lem:hessian-slices}
Let $2\le m\le n$ and $v\in\F_m(B)$. Then $v_\zeta\in\F_{m-1}(B_\zeta)$ for almost every $\zeta\in\D$. Moreover, the positive measure
\[
T_v=\beta_\zeta\wedge(\ddc v)^{m-1}\wedge\beta^{n-m}
\]
has finite mass and satisfies, for every nonnegative Borel function $f$ on $B$,
\begin{equation}\label{hessian-disintegration}
\int_B f\,dT_v
=\int_\D\left(\int_{B_\zeta}f(\zeta,w)\,
H^w_{m-1}(v_\zeta)\right)\beta_\zeta.
\end{equation}
Here the right-hand side is understood on the full-measure set of admissible slices.
\end{lemma}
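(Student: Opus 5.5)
We first establish the lemma for $v\in\E_{0,m}(B)\cap C^\infty$ in a neighbourhood of $\overline{B}$. The general case then follows by decreasing approximation.

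\emph{Smooth case.} Here the identity \eqref{hessian-disintegration} is pointwise linear algebra followed by Fubini. Because $\beta_\zeta\wedge\beta_\zeta=0$, in the expansion of $\beta^{n-m}=(\beta_\zeta+\beta_w)^{n-m}$ only the term $\beta_w^{n-m}$ survives after wedging with $\beta_\zeta$. Likewise, in $(\ddc v)^{m-1}$ every component containing $d\zeta$ or $d\bar\zeta$ is killed by $\beta_\zeta$. Hence
\[
\beta_\zeta\wedge(\ddc v)^{m-1}\wedge\beta^{n-m}
=\beta_\zeta\wedge(\ddc_w v)^{m-1}\wedge\beta_w^{n-m},
\]
and Fubini gives \eqref{hessian-disintegration} for every nonnegative Borel $f$.

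Next we check that the slices are admissible. Restricting an $m$-subharmonic function to a complex hyperplane gives an $(m-1)$-subharmonic function of the remaining $n-1$ variables. This holds because the restriction of the $m$-positive cone (the Gårding cone of $\sigma_1,\dots,\sigma_m$) to a hyperplane lies in the $(m-1)$-positive cone. In particular $v_\zeta\in\SH_{m-1}(B_\zeta)$. It is negative with boundary values $0$ on $\partial B_\zeta\subset\partial B$, so $v_\zeta\in\E_{0,m-1}(B_\zeta)$.

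For the mass bound, note that $\beta_\zeta\le\beta$. Since the current $(\ddc v)^{m-1}\wedge\beta^{n-m}$ is positive, this gives
\[
T_v(B)\le\int_B H_{m-1}(v)\wedge\beta .
\]
By the standard comparison of Hessian masses on a bounded domain (Lu, or integration by parts against $|z|^2-1$), the right-hand side is bounded by $C\bigl(\int_B H_m(v)\bigr)^{(m-1)/m}$.

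\emph{General case.} Let $v\in\F_m(B)$ and take $v_j\in\E_{0,m}(B)\cap C(\overline B)$ decreasing to $v$ with $\sup_j\int H_m(v_j)<\infty$. If needed, first regularize by convolution and a $\max$ construction near the boundary.

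The mixed Hessian measures converge weakly by Lu's convergence theorem: $T_{v_j}\to T_v$, with $\sup_j T_{v_j}(B)<\infty$. Applying the smooth identity with $f\equiv1$ gives
\[
\int_\D\Bigl(\int_{B_\zeta}H^w_{m-1}\bigl((v_j)_\zeta\bigr)\Bigr)\beta_\zeta\le C .
\]
By Fatou's lemma, for almost every $\zeta$ we have $\liminf_j\int_{B_\zeta}H^w_{m-1}((v_j)_\zeta)<\infty$. We pass to a subsequence if necessary, using that the masses along a decreasing sequence are monotone in $\F_{m-1}$ by Cegrell's comparison. Since $(v_j)_\zeta\searrow v_\zeta$, this gives $v_\zeta\in\F_{m-1}(B_\zeta)$ for almost every $\zeta$.

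For such $\zeta$, the slice measures converge weakly by Lu's decreasing-sequence theorem in dimension $n-1$. For $f$ continuous with compact support, the inner integrals are therefore bounded by $\|f\|_\infty\sup_j\int H^w_{m-1}((v_j)_\zeta)$ and converge for almost every $\zeta$. To pass to the limit in the outer integral we use dominated convergence, with domination supplied by monotone convergence of the decreasing mass functions. This yields \eqref{hessian-disintegration} for $f\in C_c(B)$. It extends to nonnegative Borel $f$ by a monotone class argument, since both sides define positive measures.

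\emph{Main obstacle.} The hard step is this limit passage. Weak convergence on each slice must be combined with the outer $\zeta$-integral, and the mass bounds on slices need not be uniform in $\zeta$.

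To handle it, we test against $f=-\varphi\ge0$ with $\varphi\in\E_{0,m}(B)\cap C(\overline B)$. On each slice the integrals $\int(-\varphi)H^w_{m-1}((v_j)_\zeta)$ increase in $j$ by the integration-by-parts monotonicity (the mixed-product calculus of Ng18). Monotone convergence then gives the identity for such $f$. Differences of these test functions span a dense subset of $C_c(B)$, which completes the argument.
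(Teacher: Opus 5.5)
Your outline follows the paper's proof closely. Both arguments use the pointwise identity $\beta_\zeta\wedge(\ddc v)^{m-1}\wedge\beta^{n-m}=\beta_\zeta\wedge(\ddc_w v)^{m-1}\wedge\beta_w^{n-m}$ with Fubini, and a uniform bound on $T_{v_j}(B)$. Your H\"older-type bound against $\rho$ is a harmless variant of the paper's $\frac1m\int_B H_m(v+\rho)$. Both then use monotonicity of the slice masses $M_j(\zeta)$ along the decreasing sequence, so that $M=\sup_j M_j$ is integrable. (No Fatou or subsequence is needed, since $M_j(\zeta)$ is nondecreasing in $j$.) Both conclude $v_\zeta\in\F_{m-1}(B_\zeta)$ off a null set, use dominated convergence with majorant $\|f\|_\infty M(\zeta)$ on the right, and use the mixed convergence theorem on the left.

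The one unjustified step is the passage from your smooth case to the approximants you actually use. In the general case you take $v_j\in\E_{0,m}(B)\cap C(\overline B)$ and apply ``the smooth identity'' to them. These $v_j$ are only continuous. A decreasing sequence in $\E_{0,m}(B)$ that is smooth near $\overline B$ and converges to $v$ is not a standard fact, and your remark about convolution and a max near the boundary does not supply one. Convolution only lives on relatively compact subsets, and a plain max destroys smoothness.

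What you need is the disintegration identity, and the $(m-1)$-subharmonicity of $(v_j)_\zeta$, for each continuous $v_j$. The paper obtains this locally. Take $f\in C_c$ supported in a relatively compact product and convolve $v_j$. The regularizations converge locally uniformly, so both the Hessian currents on $B$ and the slice Hessian measures converge weakly. Local Chern--Levine--Nirenberg estimates bound the slice masses on a slightly smaller product uniformly in the regularization parameter and in $\zeta$. Dominated convergence then passes the identity to $v_j$, and a partition of unity extends it to all of $C_c(B)$. With this step inserted, your argument is complete.

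Your final ``main obstacle'' paragraph is unnecessary. The non-uniformity of slice masses in $\zeta$ is already handled by the integrable majorant $M$. The alternative route through test functions $-\varphi$ with $\varphi\in\E_{0,m}(B)\cap C(\overline B)$ would itself need convergence theorems for test functions without compact support, on the slices and for $T_{v_j}$, which you do not supply.
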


\begin{proof}
We first justify restriction to a complex hyperplane. If $a$ is smooth and $m$-subharmonic, the mixed positivity inequality~\cite[Proposition~2.1]{Bl05} gives, for $1\le j\le m-1$,
\[
0\le\beta_\zeta\wedge(\ddc a)^j\wedge\beta^{n-1-j}
=\beta_\zeta\wedge(\ddc_w a)^j\wedge\beta_w^{n-1-j}.
\]
Indeed, wedging with $\beta_\zeta$ removes every other term containing a $\zeta$ differential; the coefficient in this identity is one. The right-hand side gives precisely the positivity conditions for the restriction to be $(m-1)$-subharmonic. Convolution on relatively compact subsets, followed by local uniform convergence, proves the same assertion for continuous $a$.

By~\cite[Theorem~3.1]{Lu15}, choose $v_j\in\E_{0,m}(B)\cap C(\overline B)$ decreasing to $v$, and put $\rho(z)=|z|^2-1$. The cone property of $\F_m$ gives $v+\rho\in\F_m(B)$. Positivity of the mixed terms and monotonicity of total mass yield the uniform estimate
\begin{align}\label{slice-mass-bound}
\int_B\beta_\zeta\wedge(\ddc v_j)^{m-1}\wedge\beta^{n-m}
&\le\int_B(\ddc v_j)^{m-1}\wedge\beta^{n-m+1}\notag\\
&\le\frac1m\int_B H_m(v_j+\rho)
\le\frac1m\int_B H_m(v+\rho)<\infty.
\end{align}
The first inequality follows because $\beta-\beta_\zeta$ is semipositive. For the cone property see~\cite[Proposition~1]{Ng18}; mass monotonicity is~\cite[Theorem~3.22]{Lu15}.

For a fixed $j$, we prove the slice formula first against a continuous test function supported in a relatively compact product subset of $B$. Smooth convolution approximants of $v_j$ converge locally uniformly, and the preceding differential-form identity gives the formula for each approximant by Fubini's theorem. On a slightly smaller product, the local Chern--Levine--Nirenberg estimates for Hessian currents, as in~\cite[Section~2]{Bl05}, give a bound for the slice masses independent of the regularization parameter and of $\zeta$. Thus local Hessian convergence and dominated convergence prove the formula for $v_j$. Every compact subset of $B$ is covered by finitely many such product neighborhoods. A partition of unity therefore gives the formula for every test function in $C_c(B)$, and equality of measures gives it for every nonnegative Borel function.

In particular, the measurable functions
\[
M_j(\zeta)=\int_{B_\zeta}H^w_{m-1}((v_j)_\zeta)
\]
have uniformly bounded integrals by~\eqref{slice-mass-bound}. Remove the union of the null sets $\{M_j=\infty\}$ over all $j$. For every remaining $\zeta$, the function $(v_j)_\zeta$ is bounded and continuous on $\overline{B_\zeta}$ and vanishes on its boundary, since $\{\zeta\}\times\partial B_\zeta\subset\partial B$. Hence $(v_j)_\zeta\in\E_{0,m-1}(B_\zeta)$. As $(v_{j+1})_\zeta\le(v_j)_\zeta$, mass monotonicity on $B_\zeta$ gives $M_j(\zeta)\le M_{j+1}(\zeta)$. Monotone convergence now shows that $M:=\sup_j M_j$ is integrable. Also, local integrability of $v$ and Fubini's theorem imply that $v_\zeta$ is locally integrable outside a null set. Removing this set and $\{M=\infty\}$ gives a common full-measure set $Z\subset\D$. For every $\zeta\in Z$, the sequence $(v_j)_\zeta$ proves that $v_\zeta\in\F_{m-1}(B_\zeta)$.

Let $f\in C_c(B)$. On each slice with $\zeta\in Z$, decreasing Hessian convergence gives convergence of the inner integral in~\eqref{hessian-disintegration}; its absolute value is bounded by $\|f\|_\infty M(\zeta)$. Dominated convergence therefore gives convergence of the right-hand side. To identify the left-hand side, fix a compact neighborhood $L$ of $\supp f$ in $B$ and choose $A$ large enough that
\[
\theta=\max\{|\zeta|^2-2,A\rho\}\in\E_{0,m}(B)
\]
equals $|\zeta|^2-2$ near $L$. There $\ddc\theta=\beta_\zeta$, so the mixed convergence theorem~\cite[Theorem~3.11]{Lu15}, with the fixed factor $\theta$, gives $T_{v_j}\rightharpoonup T_v$ locally. This identifies both limits. The resulting equality of measures proves~\eqref{hessian-disintegration} for all nonnegative Borel $f$ and gives $T_v(B)\le\int_\D M\,\beta_\zeta<\infty$.
\end{proof}

\begin{lemma}\label{lem:ordered-zero}
Let $G\subset\C^d$ be a bounded $k$-hyperconvex domain, $1\le k\le d$, and let $b\le a$. Assume either $a,b\in\E_{1,k}(G)$, or $a\in\E_{0,k}(G)$ and $b\in\F_k(G)$. In the first case, define $a-b=0$ on $\{a=b=-\infty\}$; this convention does not affect integration against $H_k(b)$, since energy measures do not charge $k$-polar sets. If
\begin{equation}\label{ordered-zero}
\int_G(a-b)H_k(b)=0,
\end{equation}
then $a=b$.
\end{lemma}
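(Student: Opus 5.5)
Since $b\le a$, the integrand $a-b$ is nonnegative, so \eqref{ordered-zero} forces $a=b$ holding $H_k(b)$-almost everywhere. I would turn this into pointwise equality with an energy identity of the form
\[
\int_G(a-b)\,H_k(b)\ \ge\ c\int_G d(a-b)\wedge d^c(a-b)\wedge(\ddc a)^{j}\wedge(\ddc b)^{k-1-j}\wedge\beta^{d-k}
\]
and then a descending induction on $j$. The basic step is integration by parts on $G$ with zero boundary values, which is available in the Cegrell-type classes via \cite{Lu15,Ng18}.

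First I would work with bounded functions. If $a,b\in\E_{0,k}$, Stokes' theorem gives
\[
\int(a-b)\,\ddc b\wedge S-\int(a-b)\,\ddc a\wedge S=\int d(a-b)\wedge d^c(a-b)\wedge S\ge0,
\]
where $S=T\wedge\beta^{d-k}$ is a positive closed current built from mixed products of $a$ and $b$. This gives a chain of inequalities:
\[
\int(a-b)H_k(b)\ \ge\ \int(a-b)(\ddc a)\wedge(\ddc b)^{k-1}\wedge\beta^{d-k}\ \ge\ \cdots\ \ge\ \int(a-b)H_k(a)\ \ge0.
\]
Every term is therefore zero, and so every gradient term $\int d(a-b)\wedge d^c(a-b)\wedge(\ddc a)^j(\ddc b)^{k-1-j}\beta^{d-k}$ vanishes. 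A cleaner route avoids the gradient terms. Put $\varphi=b-a\le0$, and for $\epsilon>0$ consider $\max\{b,a-\epsilon\}$ or $a+\epsilon\rho$. I would instead use the comparison principle for $\E_{1,k}$/$\F_k$ from \cite{Lu15}:
\[
\int_{\{b<a\}}H_k(a)\le\int_{\{b<a\}}H_k(b).
\]
The plan is as follows:
\begin{enumerate}
\item Hypothesis \eqref{ordered-zero} gives $H_k(b)(\{b<a\})=0$.
\item Replace $a$ by $a_\delta=(1-\delta)a\ge a$. Apply the comparison principle with the strictly stronger function $a_\delta+\epsilon\rho$, or apply the sharper form
\[
\int_{\{b<a\}}H_k(a)+\int_{\{b<a\}}\ \text{(extra)}\ \le\int_{\{b<a\}}H_k(b)=0.
\]
\item Use the standard domination argument: if $H_k(b)(\{b<a\})=0$ and $b\le a$, then $b\ge a$. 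In the Monge--Amp\`ere setting this is the ``domination principle'' (Cegrell; Lu \cite{Lu15} for Hessian classes). For $\psi\in\E_{0,k}$ with $-1\le\psi<0$, the comparison principle applied to $b$ and $a+\epsilon\psi$ gives
\[
\epsilon^k\int_{\{b<a+\epsilon\psi\}}H_k(\psi)\le\int_{\{b<a+\epsilon\psi\}}H_k(b)\le H_k(b)(\{b<a\})=0.
\]
\item Taking $\psi=\rho$ (normalized), $H_k(\rho)$ is comparable to Lebesgue measure. So $\{b<a+\epsilon\rho\}$ is Lebesgue-null, and being open it is empty. Hence $b\ge a+\epsilon\rho$, and letting $\epsilon\to0$ gives $b\ge a$.
\end{enumerate}

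The main obstacle is justifying the comparison principle in the two cases. When $a\in\E_{0,k}$ and $b\in\F_k$, it holds in Lu's framework: the boundary condition $\liminf(b-a)\ge0$ is encoded because $a$ has zero boundary values and $b\in\F_k$. In the energy case $a,b\in\E_{1,k}$, both functions may be unbounded, and $\{a=b=-\infty\}$ requires the convention stated. In that case I would approximate $a$ by $a_j\in\E_{0,k}$ decreasing to $a$. I would apply the argument with $\max\{b,a_j+\epsilon\rho\}$-type truncations, relying on the fact that $H_k(b)$ puts no mass on $k$-polar sets so that the sets $\{b<a\}$ are handled up to null sets. If the comparison principle is not directly citable for $\E_{1,k}$, I would fall back on the energy chain above and the identity $\int d\varphi\wedge d^c\varphi\wedge(\ddc a+\ddc b)^{k-1}\wedge\beta^{d-k}=0$, which forces $\varphi$ to be constant and hence zero by the boundary behavior. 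This uses a Poincaré-type inequality for $\E_{1,k}$, valid since $\beta^{d-1}$ is dominated by combinations after adding $\rho$.
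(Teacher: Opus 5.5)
Your working argument in steps (i), (iii) and (iv) is the paper's. From $H_k(b)(\{b<a\})=0$, the comparison principle for $b$ and $a+\varepsilon\rho$, together with $H_k(a+\varepsilon\rho)\ge\varepsilon^kH_k(\rho)$, shows that $\{b<a+\varepsilon\rho\}$ carries no $H_k(\rho)$-mass, and then one lets $\varepsilon\downarrow0$. Two details must be repaired.

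First, $\{b<a+\varepsilon\rho\}$ need not be open. The function $b-a-\varepsilon\rho$ is a difference of upper semicontinuous functions, and $a$ is not assumed continuous in either case. So ``Lebesgue-null and open, hence empty'' is invalid. Instead, let $\varepsilon\downarrow0$; the sets increase to $\{b<a\}$, so $\{b<a\}$ is Lebesgue-null. Then conclude $a=b$ everywhere because subharmonic functions are determined by their almost-everywhere classes, as the paper does.

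Second, on a general $k$-hyperconvex $G$, a function $\rho\in\E_{0,k}(G)$ with $H_k(\rho)$ comparable to Lebesgue measure on all of $G$ need not exist. The paper fixes $L\Subset G$ and takes $\rho=\max\{A\rho_0,|z|^2-R^2\}$ with $\rho_0\in\E_{0,k}(G)\cap C(\overline G)$. For large $A$ this equals $|z|^2-R^2$ near $L$, so $H_k(\rho)=\beta^d$ there, and one obtains $\vol(L\cap\{b<a\})=0$.

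The treatment of the two cases also differs. You invoke comparison directly with $b\in\F_k$, which needs its own reference. The paper instead upgrades $b$ to $\E_{1,k}$ in the second case. The hypothesis and boundedness of $a$ give $\int_G(-b)H_k(b)=\int_G(-a)H_k(b)<\infty$. For continuous approximants $b_j\searrow b$ in $\E_{0,k}(G)$, \cite[Proposition~2]{Ng18} then gives $\int_G(-b_j)H_k(b_j)\le\int_G(-b_j)H_k(b)\le\int_G(-b)H_k(b)$.

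Both cases therefore need only the $\E_{1,k}$ comparison principle \cite[Theorem~3.24]{Lu15}, so your approximation scheme for the energy case is unnecessary. Your gradient-term fallback would not work as sketched for $k\ge2$ anyway. Vanishing of $\int d\varphi\wedge d^c\varphi\wedge(\ddc a+\ddc b)^{k-1}\wedge\beta^{d-k}$ does not force $\varphi$ to be constant when the mixed current degenerates.
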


\begin{proof}
In the second case, boundedness of $a$ gives
\[
\int_G(-b)H_k(b)=\int_G(-a)H_k(b)<\infty.
\]
For continuous $\E_{0,k}$ approximants $b_j\searrow b$, Proposition~2 of~\cite{Ng18} yields
\[
\int_G(-b_j)H_k(b_j)\le\int_G(-b_j)H_k(b)
\le\int_G(-b)H_k(b).
\]
Thus $b\in\E_{1,k}(G)$. Also $a\in\E_{0,k}(G)\subset\E_{1,k}(G)$, so both cases reduce to $a,b\in\E_{1,k}(G)$.

Equation~\eqref{ordered-zero} implies $H_k(b)(\{b<a\})=0$. Fix $L\Subset G$, choose a negative $\rho_0\in\E_{0,k}(G)\cap C(\overline G)$, and take $R>\sup_G|z|$. For sufficiently large $A$,
\[
\rho=\max\{A\rho_0,|z|^2-R^2\}\in\E_{0,k}(G)
\]
equals $|z|^2-R^2$ near $L$. For $\varepsilon>0$, the cone property gives $a+\varepsilon\rho\in\E_{1,k}(G)$. With $\beta_G=\ddc|z|^2$, positivity of mixed Hessians and the comparison principle~\cite[Theorem~3.24]{Lu15} give
\begin{align*}
\varepsilon^k\int_{\{b<a+\varepsilon\rho\}}
(\ddc\rho)^k\wedge\beta_G^{d-k}
&\le\int_{\{b<a+\varepsilon\rho\}}H_k(a+\varepsilon\rho)\\
&\le\int_{\{b<a+\varepsilon\rho\}}H_k(b)=0.
\end{align*}
The last equality holds because $\rho<0$, so the integration set is contained in $\{b<a\}$. As $\varepsilon\downarrow0$, these sets increase to $\{b<a\}$. Since $(\ddc\rho)^k\wedge\beta_G^{d-k}=\beta_G^d$ near $L$, we obtain $\vol(L\cap\{b<a\})=0$. Thus $a=b$ almost everywhere on $G$. Since subharmonic functions are determined by their almost-everywhere equivalence classes, the equality holds everywhere, including their common $-\infty$ locus.
\end{proof}

\begin{proposition}\label{prop:hyperplane}
Let $2\le m\le n$ and $u\in\E_{0,m}(B)\cap C(\overline B)$. Assume that $H_m(u)$ is supported on a compact subset of
\[
\mathcal H=\{(\zeta,w)\in B:\operatorname{Im}\zeta=0\}.
\]
Then $u$ is minimal in the whole class $\F_m(B)$.
\end{proposition}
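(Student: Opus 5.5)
The plan is to compare $H_m(v)$ and $H_m(u)$ against a single test function whose complex Hessian is essentially the rank-one form $\beta_\zeta$. Let $v\in\F_m(B)$ satisfy $v\le u$ and $\int_B H_m(v)=\int_B H_m(u)$, and let $K\Subset\mathcal H$ be a compact set carrying $H_m(u)$.

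First I build the test function. Fix a compact $L\Subset B$ containing $K$, choose $C>0$ large, and put
\[
\varphi=\max\{(\operatorname{Im}\zeta)^2-C,\ A\rho\},\qquad \rho=|z|^2-1,
\]
with $A$ so large that $\varphi=(\operatorname{Im}\zeta)^2-C$ on a neighbourhood of $L$. Then $\varphi\in\E_{0,m}(B)\cap C(\overline B)$. Near $L$ it satisfies $\ddc\varphi=\tfrac12\beta_\zeta$ (up to the normalization constant), and this form is positive and rank one.

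Next I use the ordering. By~\cite[Proposition~2]{Ng18}, $v\le u$ gives $\int(-\varphi)H_m(u)\le\int(-\varphi)H_m(v)$. Since $\operatorname{Im}\zeta=0$ on $K$, the left side equals $C\int_B H_m(u)$. Everywhere we have $-\varphi\le C-(\operatorname{Im}\zeta)^2$, and the total masses agree, so
\[
\int(-\varphi)H_m(v)\le C\int_B H_m(u)-\int(\operatorname{Im}\zeta)^2H_m(v).
\]
Combining the two inequalities forces $\int(\operatorname{Im}\zeta)^2H_m(v)=0$. It also forces equality $\int(-\varphi)H_m(v)=\int(-\varphi)H_m(u)$.

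Then I integrate by parts. Telescoping $H_m(v)-H_m(u)$ should give
\[
0=\int(-\varphi)\bigl(H_m(v)-H_m(u)\bigr)=\sum_{j=0}^{m-1}\int_B(u-v)\,\ddc\varphi\wedge(\ddc u)^j\wedge(\ddc v)^{m-1-j}\wedge\beta^{n-m}.
\]
Every term is nonnegative, because $u-v\ge0$ and mixed Hessian currents are positive. In particular the $j=0$ term restricted to $L$ vanishes, so
\[
\int_L(u-v)\,dT_v=0,\qquad T_v=\beta_\zeta\wedge(\ddc v)^{m-1}\wedge\beta^{n-m}.
\]
Letting $L$ exhaust $B$ gives $\int_B(u-v)\,dT_v=0$. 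Lemma~\ref{lem:hessian-slices}, applied with $f=u-v\ge0$, then yields $\int_{B_\zeta}(u_\zeta-v_\zeta)H^w_{m-1}(v_\zeta)=0$ for almost every $\zeta\in\D$.

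Finally I apply Lemma~\ref{lem:ordered-zero} slice by slice, in its second case with $k=m-1\ge1$. Since $u$ is continuous on $\overline B$ and vanishes on $\partial B$, each slice $u_\zeta$ lies in $\E_{0,m-1}(B_\zeta)$ (as in the proof of Lemma~\ref{lem:hessian-slices}). For almost every $\zeta$ the slice $v_\zeta$ lies in $\F_{m-1}(B_\zeta)$ and satisfies $v_\zeta\le u_\zeta$. The lemma gives $u_\zeta=v_\zeta$ for almost every $\zeta$. By Fubini, $u=v$ almost everywhere on $B$, and hence everywhere since both functions are subharmonic.

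The main obstacle is justifying the integration-by-parts identity when $v$ is unbounded, with $\varphi$ only piecewise of the nice form. I would first prove the identity for continuous $\E_{0,m}$ approximants $v_j\searrow v$; it then reads as the standard symmetric formula $\int(-\varphi)\ddc a\wedge S=\int(-a)\ddc\varphi\wedge S$ for bounded functions with zero boundary values. I would then pass to the limit using the mixed convergence theorem~\cite[Theorem~3.11]{Lu15}, as in the last paragraph of the proof of Lemma~\ref{lem:hessian-slices}, together with monotone convergence for the nonnegative integrands $u-v_j$. Only the localized $j=0$ term on $L$ is needed, and the remaining terms are simply dropped as nonnegative. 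So it suffices to obtain the inequality $\int_L(u-v)\,dT_v\le\int(-\varphi)\bigl(H_m(v)-H_m(u)\bigr)$, which passes to the limit by lower semicontinuity.
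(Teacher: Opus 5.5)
Your proposal is correct and follows the paper's proof essentially step for step: the same saturated test function $\max\{(\operatorname{Im}\zeta)^2-C,A\rho\}$ (the paper takes $C=1$), the ordering inequality from $v\le u$, the telescoping identity, extraction of the $(\ddc v)^{m-1}$ term where $\ddc\varphi=\tfrac12\beta_\zeta$, exhaustion of $B$, and then the slicing lemma together with Lemma~\ref{lem:ordered-zero} on almost every slice. The one divergence is how you justify the telescoping identity: the paper applies integration by parts in $\F_m$ directly against the bounded factor, with every term finite because each mixed measure appears in $H_m(u+v)$ and $u+v\in\F_m$; this is cleaner than your approximation route, which also needs repair because $u-v_j$ need not be nonnegative for arbitrary approximants $v_j\searrow v$.
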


\begin{proof}
Let $v\in\F_m(B)$ satisfy $v\le u$ and
\[
\int_B H_m(v)=\int_B H_m(u)=M.
\]
Set $\chi(\zeta,w)=(\operatorname{Im}\zeta)^2-1$ and, for $A>0$, define
\[
\psi_A=\max\{\chi,A\rho\},\qquad \rho(z)=|z|^2-1.
\]
Then $\psi_A\in\E_{0,m}(B)\cap C(\overline B)$ and $-1\le\psi_A<0$. For every sufficiently large $A$, compactness of $\supp H_m(u)\subset\mathcal H$ gives $\psi_A=-1$ on that support. Since $v\le u$, Proposition~2 of~\cite{Ng18} gives
\[
M=\int_B(-\psi_A)H_m(u)
\le\int_B(-\psi_A)H_m(v)\le M.
\]
All three quantities are therefore equal. We next justify integration by parts with the possibly unbounded function $v$. For $0\le j\le m-1$, set
\[
R_j=(\ddc v)^j\wedge(\ddc u)^{m-1-j}\wedge\beta^{n-m}.
\]
The measures $\ddc u\wedge R_j$ and $\ddc v\wedge R_j$ have finite mass: each occurs, up to a positive binomial coefficient, in the expansion of $H_m(u+v)$, and $u+v\in\F_m(B)$. Since $\psi_A$ is bounded, integration by parts~\cite[Theorem~3.16]{Lu15} gives
\begin{align*}
\int_B(-v)\ddc\psi_A\wedge R_j
&=\int_B(-\psi_A)\ddc v\wedge R_j<\infty,\\
\int_B(-u)\ddc\psi_A\wedge R_j
&=\int_B(-\psi_A)\ddc u\wedge R_j<\infty.
\end{align*}
The cited theorem applies in $\F_m$ as soon as one side is finite; its proof uses decreasing $\E_{0,m}$ approximants. Thus both identities hold without any boundedness assumption on $v$, and their difference is well defined. Summing these differences gives the telescoping identity
\begin{align}\label{saturation-identity}
0&=\int_B(-\psi_A)\bigl(H_m(v)-H_m(u)\bigr)\notag\\
&=\sum_{j=0}^{m-1}\int_B(u-v)\ddc\psi_A\wedge
(\ddc v)^j\wedge(\ddc u)^{m-1-j}\wedge\beta^{n-m}.
\end{align}
Each summand is finite by the preceding identities and nonnegative because $v\le u$. Hence each is zero.

For a fixed $A$, put $O_A=\{\chi>A\rho\}$. On this open set $\psi_A=\chi$ and $\ddc\chi=\tfrac12\beta_\zeta$. To use this identity in the mixed measure, take bounded approximants $v_l\searrow v$. Locality gives the identity with $v_l$ on $O_A$, and local mixed convergence as in Lemma~\ref{lem:hessian-slices} gives
\[
\one_{O_A}\ddc\psi_A\wedge(\ddc v)^{m-1}\wedge\beta^{n-m}
=\tfrac12\one_{O_A}T_v.
\]
This is an equality of measures, so it may be integrated against the nonnegative Borel function $u-v$. The term $j=m-1$ in~\eqref{saturation-identity} therefore gives $\int_{O_A}(u-v)\,dT_v=0$. Finally, take integers $A\to\infty$. Since $\rho<0$ on $B$, the sets $O_A$ increase to $B$, and monotone convergence yields
\begin{equation}\label{hyperplane-zero}
\int_B(u-v)\,\beta_\zeta\wedge(\ddc v)^{m-1}\wedge\beta^{n-m}=0.
\end{equation}
Apply Lemma~\ref{lem:hessian-slices} to $u$ and $v$ and intersect their full-measure sets of admissible slices. Tonelli's theorem and~\eqref{hyperplane-zero} then give, for almost every $\zeta$ in this intersection,
\[
\int_{B_\zeta}(u_\zeta-v_\zeta)H^w_{m-1}(v_\zeta)=0.
\]
Here $u_\zeta$ is bounded and continuous up to $\partial B_\zeta$, with zero boundary values and finite mass; hence $u_\zeta\in\E_{0,m-1}(B_\zeta)$, while $v_\zeta\in\F_{m-1}(B_\zeta)$. Lemma~\ref{lem:ordered-zero} gives $v_\zeta=u_\zeta$ on each such slice. Fubini's theorem implies $v=u$ almost everywhere on $B$. Since subharmonic functions are determined by their almost-everywhere equivalence classes, $v=u$ everywhere.
\end{proof}

\begin{remark}
The function $\chi=(\operatorname{Im}\zeta)^2-1$ has $\ddc\chi=\frac12\beta_\zeta$. For $m=1$, it is strictly subharmonic, and both $\chi\pm\varepsilon f$ remain negative and subharmonic for every real $f\in C_c^\infty(B)$ and sufficiently small $\varepsilon>0$. If a finite measure is carried by $\mathcal H$, saturation of its integral against $\chi$ therefore forces every dominating measure of equal mass to coincide with it. The extension to these negative test functions is justified by~\cite[Remark~2(1)]{Ng18}.

For $m\ge2$, the rank-one form $\ddc\chi$ lies on the boundary of the cone of $m$-positive forms: its second elementary symmetric function vanishes. Arbitrary two-sided perturbations are no longer admissible. Nevertheless, for ordered potentials the saturation identity~\eqref{saturation-identity} still gives equality on slices, and hence minimality. The measures in Theorem~\ref{thm:hessian-counterexamples} show that this equality of potentials does not force maximality of the measure.
\end{remark}

\subsection{An explicit chain of measures}

We prove Theorem~\ref{thm:hessian-counterexamples} for the measures defined in~\eqref{hessian-family-measures}.

\begin{proof}[Proof of Theorem~\ref{thm:hessian-counterexamples}]
We first prove existence and regularity. Consider the Lipschitz psh function
\[
p(z)=|\operatorname{Im}\zeta|+|z|^2-3,
\qquad b(z)=\max\{p(z),8(|z|^2-1)\}.
\]
It is negative on $B$, continuous on $\overline B$, and zero on $\partial B$. Since $b\ge8\rho$, mass monotonicity shows that $b\in\E_{0,m}(B)$. On a neighborhood of
\[
\{(x,w):|x|\le1/4,\ |w|\le1/2\}\subset\mathcal H
\]
we have $b=p$. Indeed, on this compact set $|z|^2\le5/16$, and $p-8\rho=5-7|z|^2>0$.

Put $S=\ddc|\operatorname{Im}\zeta|$. This positive current depends only on $\zeta$ and has rank one, so local regularization gives $S^2=0$. Hence, near $\supp\mu_r$,
\begin{equation}\label{subsolution-hessian}
H_m(b)=\beta^n+mS\wedge\beta^{n-1}.
\end{equation}
The second term is a positive constant times $dx\,dV(w)$ on $\mathcal H$. With normalization~\eqref{normalization}, it is exactly
\[
mS\wedge\beta^{n-1}
=\frac{m\,2^{n-1}(n-1)!}{\pi^n}\,
dx\otimes\delta_0(d\operatorname{Im}\zeta)\otimes dV(w).
\]
Comparison with~\eqref{hessian-family-measures} therefore gives
\begin{equation}\label{explicit-subsolution}
\mu_r\le H_m(a_rb),\qquad
a_r^m=\frac{2^{2-n}\pi}{m r^{2n-2}}.
\end{equation}
This proves the required subsolution condition directly. In particular, $\mu_r$ does not charge $m$-polar sets.

By~\cite[Lemma~5.1]{Lu15}, there is a unique $u_r\in\E_{0,m}(B)$ with $H_m(u_r)=\mu_r$. We verify the hypotheses of Theorem~B$'$ of~\cite{BZ21}. The smooth defining function $\rho$ has $\ddc\rho=\beta>0$, so $B$ is strongly $m$-pseudoconvex. The measure $\mu_r$ is positive and has mass one. The subsolution $a_rb$ is Lipschitz, hence belongs to $\E_{0,m}(B)\cap C^{0,1/2}(\overline B)$, has zero boundary values, and satisfies~\eqref{explicit-subsolution}. Finally, the prescribed boundary datum $0$ belongs to $C^{1,1}(\partial B)$. The cited theorem therefore gives a H\"older continuous solution on $\overline B$. By comparison with the subsolution it is bounded below by $a_rb$, so it belongs to $\E_{0,m}(B)$ and equals $u_r$ by uniqueness. We use this corrected version of~\cite{BZ20} without specifying an exponent. Proposition~\ref{prop:hyperplane} proves minimality.

We next prove the order relation. Let $\varphi\in\E_{0,m}(B)\cap C(B)$. For each $x\in[-1/4,1/4]$, its restriction $w\mapsto\varphi(x,w)$ is $(m-1)$-subharmonic, hence subharmonic, on $B_x$. Ball averages of a subharmonic function are nondecreasing with the radius. Consequently,
\[
\int\varphi(x,w)\,d\tau_s(w)
\le\int\varphi(x,w)\,d\tau_r(w)
\quad(0<s<r<1/2).
\]
Integration against $\lambda_0$ gives $\mu_s\succeq_m\mu_r$. The measures are distinct, since $\mu_s$ gives zero mass to $\{s<|w|<r\}$ whereas $\mu_r$ gives it positive mass. 

Both Hessian masses equal one. If two distinct potentials in the family were pointwise comparable, minimality of the larger one would force equality. This proves the last assertion.
\end{proof}

\begin{remark}
For $m<n$, restriction of an $m$-subharmonic function to an arbitrary complex line need not be subharmonic. For example,
\[
Q(z)=-a|z_1|^2+\sum_{j=2}^n|z_j|^2,
\qquad 0<a<\frac{n-m}{m},
\]
is strictly $m$-subharmonic: for $1\le k\le m$, its $k$-th elementary symmetric function is
\[
\binom{n-1}{k}-a\binom{n-1}{k-1}>0.
\]
Its restriction to the $z_1$ axis is strictly superharmonic. This explains the use of complex hyperplanes, with a loss of one Hessian order, in Lemma~\ref{lem:hessian-slices}.
\end{remark}

\subsection{The linear endpoint}

\begin{proposition}\label{prop:linear}
On the unit ball $B\subset\C^n$, a function $u\in\F_1(B)$ is minimal if and only if $H_1(u)$ is maximal for the subharmonic ordering. Thus there is no counterexample for $m=1$ on $B$.
\end{proposition}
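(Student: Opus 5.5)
The direction ``maximal $\Rightarrow$ minimal'' is~\cite[Proposition~5]{Ng18}, so I only need to show that minimality of $u\in\F_1(B)$ forces maximality of $\mu:=H_1(u)$. The plan is to represent functions in $\F_1(B)$ as Green potentials. A measure $\nu$ dominating $\mu$ then yields a potential lying below $u$ with the same mass, and minimality of $u$ identifies the two measures.

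First, note that $H_1(a)=\ddc a\wedge\beta^{n-1}$ is a fixed positive constant times the Laplacian $\Delta a\,dV$. Let $G(z,y)<0$ be the Green function of $B$, normalized so that $\ddc_y G(z,\cdot)\wedge\beta^{n-1}=\delta_z$. For a finite positive measure $\sigma$ on $B$, put
\[
G\sigma(z)=\int_B G(z,y)\,d\sigma(y).
\]
The first key step is the Riesz-type identity $u=G\mu$ for every $u\in\F_1(B)$, where $\mu=H_1(u)$:
\begin{enumerate}
\item Riesz decomposition gives $u=G\mu+h$ with $h$ harmonic.
\item For $u$ in $\F_1$, its smallest harmonic majorant is $0$ (cf.~\cite{Ng19}); since $u\le h\le0$, this forces $h=0$.
\item Alternatively, test $u_j\in\E_{0,1}\cap C(\overline B)$ with $u_j\searrow u$, for which $u_j=G(H_1(u_j))$ holds classically, and pass to the limit by weak convergence of $H_1(u_j)$ together with the integration-by-parts formula of~\cite[Theorem~3.16]{Lu15}.
\end{enumerate}
Conversely, for any finite positive $\nu$, the potential $G\nu$ lies in $\F_1(B)$, with $H_1(G\nu)=\nu$. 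To see this, note that $G\nu$ is the decreasing limit of the potentials of the measures $\nu|_{\{|y|\le1-1/j\}}$ smoothed by convolution. These lie in $\E_{0,1}(B)\cap C(\overline B)$ and have masses bounded by $\nu(B)$.

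Now let $\nu\succeq_1\mu$ with $\nu(B)=\mu(B)$. Fix $z\in B$ and, for $k\in\mathbb N$, set
\[
\varphi_k=\max\{G(z,\cdot),-k\}.
\]
This function is continuous, negative, subharmonic, zero on $\partial B$, and has finite Laplacian mass, so $\varphi_k\in\E_{0,1}(B)\cap C(B)$ is an admissible test function in~\eqref{order}. The order relation gives
\[
\int(-\varphi_k)\,d\nu\ge\int(-\varphi_k)\,d\mu .
\]
Letting $k\to\infty$ by monotone convergence yields $-G\nu(z)\ge-G\mu(z)=-u(z)$. Hence $v:=G\nu\in\F_1(B)$ satisfies $v\le u$ and $\int_B H_1(v)=\nu(B)=\mu(B)=\int_B H_1(u)$. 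Minimality of $u$ gives $v=u$, and therefore $\nu=H_1(v)=H_1(u)=\mu$. So $\mu$ is maximal.

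The main obstacle is the potential-theoretic identification $u=G(H_1(u))$ on $\F_1(B)$, that is, showing that the harmonic part vanishes for possibly unbounded $u$ whose boundary values hold only in the Cegrell sense. I would first try approximation by $\E_{0,1}$ functions, where the identity is classical, and then pass to the limit pointwise. Lower semicontinuity of $G(z,\cdot)$ and the truncations $\varphi_k$ control the Green integral, and the identity survives on a dense set. It then extends everywhere because both sides are subharmonic and agree almost everywhere.
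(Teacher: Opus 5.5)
Your proposal is correct and follows the paper's proof: you write $u=G\mu$ and set $v=G\nu\in\F_1(B)$, test the order against the truncated kernels $\max\{G(z,\cdot),-k\}\in\E_{0,1}(B)\cap C(\overline B)$, pass to the limit by monotone convergence to get $v\le u$ with equal mass, and conclude $\nu=\mu$ by minimality. The only difference is that you sketch a justification of $u=G(H_1(u))$ on $\F_1(B)$ and of $G\nu\in\F_1(B)$, which the paper simply cites as classical Green potential theory.
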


\begin{proof}
The implication from maximality to minimality is~\cite[Proposition~5]{Ng18}. Conversely, write $\mu=H_1(u)$ and suppose that $\nu\succeq_1\mu$ with $\nu(B)=\mu(B)$. In the linear case, $\F_1(B)$ is the class of negative Green potentials of finite positive measures. Let $g_B(x,y)\le0$ be the symmetric Green kernel normalized by $H_1(g_B(\cdot,y))=\delta_y$, and set
\[
v(x)=\int_B g_B(x,y)\,d\nu(y).
\]
Classical Green potential theory gives $v\in\F_1(B)$, $H_1(v)=\nu$, and $u(x)=\int_B g_B(x,y)\,d\mu(y)$. These statements also hold for measures charging polar sets, since $H_1$ is a positive constant multiple of the Laplacian. For fixed $x\in B$, the truncated kernels $g_{x,j}=\max\{g_B(x,\cdot),-j\}$ belong to $\E_{0,1}(B)\cap C(\overline B)$ and decrease to $g_B(x,\cdot)$. Testing the order against $g_{x,j}$ and using monotone convergence for $-g_{x,j}$ gives $v(x)\le u(x)$, with extended values allowed. Minimality of $u$ yields $v=u$, and hence $\nu=\mu$.
\end{proof}

\section{Partial energy and the product criterion}\label{sec:prelim}

On a hyperconvex domain $H\subset\C^d$, we write $\E_0=\E_{0,d}$, $\F=\F_d$ and $\E_1=\E_{1,d}$. The local Cegrell class $\E(H)$ contains $\F(H)$, $\E_1(H)$ and all locally bounded negative psh functions. Monge--Amp\`ere measures and mixed products are understood in the sense of Bedford--Taylor and Cegrell~\cite{BT82,Ce98,Ce04}.

We shall use integration by parts and the comparison principle in $\E_1$. Mixed energy integrals are finite by Cegrell's energy inequality. For example, for $v_0,\ldots,v_d\in\E_1(H)$,
\[
\int_H(-v_0)\ddc v_1\wedge\cdots\wedge\ddc v_d
\le\prod_{j=0}^{d}
\left(\int_H(-v_j)(\ddc v_j)^d\right)^{1/(d+1)}.
\]
For the mixed energy inequality see~\cite{Ce98} and~\cite[Theorem~1.1, $p=1$]{CN17}; for the energy calculus see~\cite{ACC12}. Integration by parts and comparison in $\E_1$ also follow from~\cite[Theorems~3.21 and~3.24]{Lu15} with $m=d$.

\begin{lemma}\label{lem:strict-energy}
Let $a,b\in\E_1(H)$ and $a\le b$. Set
\[
I_H(a)=\int_H a(\ddc a)^d.
\]
Then $I_H(a)\le I_H(b)$, and equality holds if and only if $a=b$.
\end{lemma}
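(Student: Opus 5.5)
The plan is a telescoping argument with integration by parts in $\E_1(H)$, followed by Lemma~\ref{lem:ordered-zero} with $k=d$ to obtain strictness. I would start from the identity
\[
I_H(b)-I_H(a)=\int_H b\,(\ddc b)^d-\int_H a\,(\ddc a)^d .
\]
Then I would insert the intermediate terms $\int_H b\,(\ddc a)^j\wedge(\ddc b)^{d-j}$ and $\int_H a\,(\ddc a)^j\wedge(\ddc b)^{d-j}$. All these mixed integrals are finite by Cegrell's energy inequality quoted above.

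Using integration by parts in $\E_1$ (\cite[Theorem~3.21]{Lu15} with $m=d$), each step of the telescope has the form
\[
\int_H (b-a)\,\ddc a\wedge(\ddc a)^{j}\wedge(\ddc b)^{d-1-j}
\quad\text{or}\quad
\int_H (b-a)\,\ddc b\wedge(\ddc a)^{j}\wedge(\ddc b)^{d-1-j} .
\]
The cleanest bookkeeping is
\[
I_H(b)-I_H(a)=\sum_{j=0}^{d}\int_H (b-a)\,(\ddc a)^{j}\wedge(\ddc b)^{d-j}.
\]
To get this, write $\int a(\ddc a)^j(\ddc b)^{d-j}$ and move one $\ddc$ between $a$ and $b$ in successive steps: $\int b\,\ddc a\wedge T=\int a\,\ddc b\wedge T$ for $T=(\ddc a)^{j-1}\wedge(\ddc b)^{d-j}$. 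The exact weights do not matter. What matters is that every term is $\int(b-a)\,d\mu_j$ for a positive mixed Monge--Amp\`ere measure $\mu_j$, together with the term $j=d$, namely $\int(b-a)(\ddc a)^d$. Since $a\le b$, all terms are nonnegative, which gives $I_H(a)\le I_H(b)$.

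To avoid trouble with $-\infty$ values when splitting differences, I would verify the identity first for $a,b\in\E_0(H)$. For general $a,b$, I would use decreasing approximants $a_j=\max\{a,\,b_j'\}$ and related constructions from~\cite{Ce04}, or more simply apply the cited integration by parts directly on $\E_1$, where all the integrals are finite. The convention $b-a=0$ on $\{a=b=-\infty\}$ from Lemma~\ref{lem:ordered-zero} is harmless, because measures of the form $(\ddc a)^j\wedge(\ddc b)^{d-j}$ with $a,b\in\E_1$ do not charge pluripolar sets.

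For the equality case, suppose $I_H(a)=I_H(b)$. Then every nonnegative term vanishes, and in particular
\[
\int_H (b-a)(\ddc a)^d=0 .
\]
Lemma~\ref{lem:ordered-zero}, applied with $k=d$, $G=H$, the smaller function $a$ and the larger function $b$ (the first case, both in $\E_1$), gives $a=b$. The converse direction is trivial.

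The main obstacle is making the telescoping identity rigorous when both functions are unbounded, since one must avoid forms like $\infty-\infty$. My first attempt is to show that each individual integral is finite via the mixed energy inequality, so that linear rearrangement is legitimate. If the integration-by-parts theorem in $\E_1$ only covers symmetric exchange $\int u\,\ddc v\wedge T=\int v\,\ddc u\wedge T$, that suffices for each step.
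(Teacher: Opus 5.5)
Your proposal is correct and follows the paper's proof. Both use the telescoping identity $I_H(b)-I_H(a)=\sum_{j=0}^d\int_H(b-a)(\ddc a)^j\wedge(\ddc b)^{d-j}$, obtained from $\int_H b\,\ddc a\wedge R=\int_H a\,\ddc b\wedge R$ in $\E_1$, with finiteness of all mixed energies justifying the rearrangement, and then apply Lemma~\ref{lem:ordered-zero} to the vanishing $j=d$ term. The approximation detour is unnecessary: the paper applies integration by parts in $\E_1$ directly, as in your second option.
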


\begin{proof}
Set $S=\ddc a$ and $T=\ddc b$. All mixed energy integrals below are finite. For $1\le j\le d$, integration by parts in $\E_1$ gives
\[
\int_H bS^j\wedge T^{d-j}
=\int_H aS^{j-1}\wedge T^{d-j+1}.
\]
Therefore, on expanding $b-a$ in each integral, every term on the right-hand side below cancels with its neighbor except $\int_H bT^d$ and $-\int_H aS^d$:
\begin{equation}\label{energy-difference}
I_H(b)-I_H(a)
=\sum_{j=0}^{d}\int_H(b-a)S^j\wedge T^{d-j}.
\end{equation}
The mixed measures do not charge pluripolar sets, so the common $-\infty$ locus of $a$ and $b$ does not affect these integrals. All summands are nonnegative. If their sum is zero, the term $j=d$ gives $\int_H(b-a)(\ddc a)^d=0$. The finite-energy case of Lemma~\ref{lem:ordered-zero}, with the roles of $a$ and $b$ interchanged, gives $a=b$.
\end{proof}

We next recall the partial energy theorem. Let $g_D$ denote the Green function of a planar hyperconvex domain $D$, normalized by $\ddc_z g_D(z,\zeta)=\delta_\zeta$. If $\mu$ is a finite positive measure on $D\times G$, its pushforward by $\pi(z,w)=z$ is the finite measure defined by
\[
(\pi_*\mu)(A)=\mu(A\times G),\qquad A\subset D\text{ Borel}.
\]
Equivalently, $\int_D f\,d(\pi_*\mu)=\int_{D\times G}f\circ\pi\,d\mu$ for every nonnegative Borel function $f$. This definition does not require $\pi$ to be proper.

\begin{theorem}[\cite{ACKPZ09}, Theorem~3.1]\label{thm:partial}
Let $D\subset\C$ and $G\subset\C^q$ be hyperconvex domains, and let $U\in\F(D\times G)$. The Green potential $P_U$ defined at every point by~\eqref{partial-def-intro} belongs to $\F(D)$ and satisfies
\begin{equation}\label{partial-pushforward}
\ddc P_U=\pi_*\bigl((\ddc U)^{q+1}\bigr),\qquad
\int_D\ddc P_U=\int_{D\times G}(\ddc U)^{q+1}.
\end{equation}
For every $z$ with $P_U(z)>-\infty$, the slice $U(z,\cdot)$ belongs to $\E_1(G)$ and
\begin{equation}\label{green-partial}
P_U(z)=\int_G U(z,w)\bigl(\ddc_w U(z,w)\bigr)^q.
\end{equation}
The exceptional set $\{P_U=-\infty\}$ is polar. Thus the slice-energy formula agrees with the everywhere-defined Green potential outside a polar set.
\end{theorem}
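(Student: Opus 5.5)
The plan is to prove the identities first for bounded continuous $U$ with zero boundary values, and then pass to general $U\in\F(D\times G)$ by decreasing approximation.

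\emph{Step 1: continuous case.} Let $U\in\E_0(D\times G)\cap C(\overline{D\times G})$ with $U=0$ on the boundary. Define the slice energy
\[
e(z)=\int_G U(z,w)\bigl(\ddc_w U(z,w)\bigr)^q .
\]
Each slice $U(z,\cdot)$ is bounded, psh and vanishes on $\partial G$, so $e$ is well defined. Fix a test function $\chi\in C_c^\infty(D)$. I would write $\chi\circ\pi$ as a difference of two bounded functions in $\E_0(D\times G)$, say $\chi\circ\pi=(\chi\circ\pi+A\psi)-A\psi$. Here $\psi=\max\{|z|^2-C,\,A'\rho\}$ is built as in the proof of Lemma~\ref{lem:hessian-slices}, so that $\psi=|z|^2-C$ near $\supp\chi\times\overline G'$ for each $G'\Subset G$. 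Cegrell integration by parts in $\E_0$ then gives
\[
\int_{D\times G}(\chi\circ\pi)\,(\ddc U)^{q+1}=\int_{D\times G}U\,\ddc(\chi\circ\pi)\wedge(\ddc U)^q .
\]
Since $\ddc(\chi\circ\pi)$ is a $(1,1)$-form in $z$ only, wedging it with $(\ddc U)^q$ keeps only the $(\ddc_wU)^q$ component. This is the same slicing identity used in Lemma~\ref{lem:hessian-slices}: prove it for smooth approximants and pass to the limit. Fubini then turns the right-hand side into $\int_D e\,\ddc\chi$. Hence $\ddc e=\pi_*((\ddc U)^{q+1})$ in the sense of distributions.

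\emph{Step 2: identify $e$ with $P_U$.} Continuity of $U$ up to the boundary shows that $e$ is continuous, so by Step~1 it is subharmonic. Moreover $e\le0$ and $e(z)\to0$ as $z\to\partial D$, because the slices tend uniformly to $0$. By the Riesz decomposition on the hyperconvex planar domain $D$, $e$ is the Green potential of its Laplacian, and therefore $e=P_U$. This gives \eqref{partial-pushforward} and \eqref{green-partial} in the continuous case. The mass identity is immediate from the definition of $\pi_*$.

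\emph{Step 3: the limit.} For general $U\in\F(D\times G)$, take $U_j\in\E_0\cap C(\overline{D\times G})$ with $U_j\searrow U$. For fixed $z$, the function $g_D(z,\pi(\cdot))$ is negative and psh on the product. Proposition~2 of~\cite{Ng18}, applied with the usual truncations $\max\{g_D(z,\cdot),-k\}$, shows that $P_{U_j}(z)$ decreases in $j$. Mixed convergence combined with monotone convergence in $k$ then gives $P_{U_j}(z)\searrow P_U(z)$. On the slice side, for fixed $z$ we have $U_j(z,\cdot)\searrow U(z,\cdot)$, and the slice energies are monotone in $j$ by Lemma~\ref{lem:strict-energy}. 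If $P_U(z)>-\infty$, these energies are uniformly bounded. Hence $U(z,\cdot)\in\E_1(G)$, and Cegrell's continuity of energy along decreasing sequences yields \eqref{green-partial}. The measures $\pi_*((\ddc U_j)^{q+1})$ converge weakly, with uniformly bounded mass. Since the potentials $P_{U_j}$ decrease, $P_U\in\F(D)$ and $\ddc P_U=\pi_*((\ddc U)^{q+1})$. Finally, $P_U$ is subharmonic and not identically $-\infty$, so $\{P_U=-\infty\}$ is polar.

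The main obstacle is the exchange of limits in Step~3. The test function $g_D(z,\pi(\cdot))$ does not vanish on $D\times\partial G$, so it is not a standard test function. Convergence of $\int g_D(z,\cdot)\,(\ddc U_j)^{q+1}$ must therefore come from monotonicity rather than weak convergence. I would first try to obtain it by truncating $g_D$, using the mixed-measure monotonicity from~\cite{Ng18}, and letting the truncation level go to infinity at the end. A second, smaller technical point is the slicing identity in Step~1, which needs the local regularization argument from Lemma~\ref{lem:hessian-slices}.
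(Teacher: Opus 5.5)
\textbf{There is a genuine gap: Step~1 is not justified, and Step~2 rests on claims that are false.}

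\emph{Step~1.} You write $\chi\circ\pi=(\chi\circ\pi+A\psi)-A\psi$ with both pieces in $\E_0(D\times G)$. This is impossible. On $\supp\chi\times\partial G$ the function $\chi\circ\pi$ equals $\chi(z)$, while $\psi$ and every function in $\E_0(D\times G)$ tend to $0$ there. So $\chi\circ\pi+A\psi$ has nonzero boundary values, and Cegrell's integration by parts is not available. Boundary terms of this kind do not vanish automatically. For the bounded psh test function $-1$, the analogous identity would read $-\int(\ddc U)^{q+1}=0$. For test functions depending only on $z$, the flux through $D\times\partial G$ does vanish when $U$ is smooth up to the boundary. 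The reason is that $U\equiv0$ on $D\times\partial G$ forces the $z$-row of the complex Hessian, restricted to the complex tangent space there, to vanish. Proving this for nonsmooth $U$ is precisely the substance of \cite[Theorem~3.1]{ACKPZ09}. You flag the difficulty only for $g_D(z,\pi(\cdot))$ in Step~3, but it is already the obstruction in Step~1.

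\emph{Step~2.} Uniform convergence of slices does not control slice energies, because slice mass can pile up near $\partial G$. Take $D=G=\D$, a sequence $a_k\to a\in\overline{\D}$, and
\[
U(z,w)=\sum_{k\ge1}\max\bigl\{k2^k\log|w|,\ 2^{-k}\max\{k^{-5}g_\D(z,a_k),-1\}\bigr\}.
\]
Each summand takes values in $[-2^{-k},0]$, is continuous on $\overline{\D}^2$, and vanishes on $\partial(\D^2)$. By Lemma~\ref{lem:product}, each summand has Monge--Amp\`ere mass $k^{-4}$. Cegrell's mixed mass inequality then gives $U\in\E_0(\D^2)\cap C(\overline{\D}^2)$, with total mass at most $(\sum_k k^{-2})^2$.

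However, $\ddc_wU(a_k,\cdot)\ge k2^k\sigma_{\rho_k}$ with $\rho_k=\exp(-4^{-k}/k)$, and $U(a_k,\cdot)\le-2^{-k}$ on $\{|w|=\rho_k\}$. Hence $e(a_k)\le-k$.
\begin{itemize}
\item For $a\in\D$ this contradicts continuity of $e$ at $a$.
\item For $a\in\partial\D$ it contradicts $e\to0$ at $\partial D$, even though the slices $U(a_k,\cdot)\to0$ uniformly.
\end{itemize}
So even if Step~1 were repaired, you would only obtain $e=P_U+H$ with $H$ harmonic. You would still need a separate argument that $H=0$. Continuity and zero boundary values of $e$ cannot supply it.

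\emph{How the paper proceeds.} The paper does not reprove any of this. It takes $P_U\in\F(D)$ and the slice-energy formula directly from \cite[Theorem~3.1]{ACKPZ09}. The identity $\ddc P_U=\pi_*((\ddc U)^{q+1})$ is then immediate. Since $g_D(z,\zeta)$ depends only on $\zeta=\pi(\zeta,w)$, formula~\eqref{partial-def-intro} is literally the Green potential of the finite measure $\pi_*((\ddc U)^{q+1})$. Your weak-limit argument for this identity in Step~3 is therefore unnecessary.
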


The slice assertion and the membership $P_U\in\F(D)$ are exactly~\cite[Theorem~3.1]{ACKPZ09}, with the two factors interchanged. Its hypotheses require bounded hyperconvex factors and $U\in\F(D\times G)$, but no boundary smoothness. Its normalization agrees with~\eqref{normalization}. Writing $\mu=(\ddc U)^{q+1}$, formula~\eqref{partial-def-intro} is the Green potential of $\pi_*\mu$. Consequently, for every nonnegative $f\in C_c^\infty(D)$,
\[
\int_D f\,\ddc P_U
=\int_{D\times G}f\circ\pi\,d\mu,
\]
which proves the pushforward identity in~\eqref{partial-pushforward}. The equality of masses follows by evaluating $\pi_*\mu$ on $D$.

The following special case of the product formula will be used both on $\Omega$ and on its slices.

\begin{lemma}\label{lem:product}
Let $D_j\subset\C^{d_j}$, $1\le j\le k$, be hyperconvex domains, and put $N=\sum_{j=1}^k d_j$. Suppose $f_j\in\E_0(D_j)\cap C(\overline D_j)$, $f_j\ge-\ell$ for some $\ell>0$, and $(\ddc f_j)^{d_j}$ is carried by $\{f_j=-\ell\}$. Then
\[
F(z_1,\ldots,z_k)=\max_{1\le j\le k}f_j(z_j)
\]
belongs to $\E_0(D_1\times\cdots\times D_k)\cap C(\overline{D_1\times\cdots\times D_k})$, and
\begin{equation}\label{product-measure}
(\ddc F)^N=\bigotimes_{j=1}^k(\ddc f_j)^{d_j}.
\end{equation}
\end{lemma}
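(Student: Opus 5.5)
We prove Lemma~\ref{lem:product} in two steps. First we show that $F$ has the right regularity, boundary values and finite mass. Then we identify the Monge--Amp\`ere measure $(\ddc F)^N$ by an explicit computation on the region where all $f_j$ sit at their minimum, combined with a mass count.

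\textbf{Membership.} Each $f_j\circ\pi_j$ is psh, continuous on the closure and nonpositive, so $F$ is psh, continuous on $\overline{D_1\times\cdots\times D_k}$ and satisfies $-\ell\le F\le0$. A boundary point of the product has at least one coordinate $z_j\in\partial D_j$. There $f_j=0$, and hence $F=0$, since every $f_i\le0$. Finiteness of total mass will come out of the mass computation below. Alternatively, one can compare $F$ with $\max_j A\rho_j(z_j)$, where $\rho_j$ are exhaustion functions, using mass monotonicity. Either way $F\in\E_0$.

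\textbf{Computing the measure.} We split the product into two regions.
\begin{enumerate}
\item On the open set where some $f_i$ is strictly larger than another, we argue by locality. Near such a point $F=\max_{j\in J}f_j$ for a proper subfamily $J$ of indices, so $F$ is locally independent of the variables $z_i$ with $i\notin J$. Hence $F$ has fewer than $N$ effective complex variables, and $(\ddc F)^N=0$ there. Points where several $f_j$ tie but lie above $-\ell$ also need care. Since $(\ddc f_j)^{d_j}$ lives on $\{f_j=-\ell\}$, the set $\{-\ell<f_j\}$ carries no Monge--Amp\`ere mass of $f_j$. So we show that $(\ddc F)^N$ vanishes on $\{F>-\ell\}$: at such points, at least one coordinate $z_j$ has $f_j(z_j)>-\ell$.
\item On $\{F=-\ell\}=\prod_j\{f_j=-\ell\}$ we use the identity
\[
(\ddc\max_j f_j)^N=\bigwedge_j(\ddc f_j)^{d_j},
\]
which holds in the sense of products of currents in separate variables. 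The cleanest route is an approximation. Take $F_\varepsilon=\max_j\{f_j,-\ell+\varepsilon\}$ style truncations, or smooth decreasing approximants $f_j^{(s)}$ with $F^{(s)}=\max_\varepsilon\{f^{(s)}_j\}$ (a regularized max). For a regularized max $M_\varepsilon(x_1,\dots,x_k)$ the Monge--Amp\`ere measure is
\[
\sum\partial^\alpha M_\varepsilon\text{-terms}\wedge\cdots.
\]
Because each $f_j$ depends on its own block of $d_j$ variables, only the term $\prod_j(\partial_jM_\varepsilon)^{d_j}(\ddc f_j)^{d_j}$ survives. The other terms contain the wedge of at least $d_j+1$ forms in the block $z_j$, or Hessian terms of $M_\varepsilon$ of rank $\le k-1$ in the block directions, which cannot fill $N$ dimensions.
\end{enumerate}

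\textbf{Expected main obstacle.} The hard step is making the regularized-max computation in (ii) rigorous and passing to the limit. The limit should carry $\prod_j(\partial_jM_\varepsilon)^{d_j}$ to $1$ on the set where all $f_j$ are equal and minimal. This is where the hypothesis that $(\ddc f_j)^{d_j}$ is carried by $\{f_j=-\ell\}$ enters: it puts the product measure $\bigotimes_j(\ddc f_j)^{d_j}$ on $\{F=-\ell\}$. I would sidestep the fine limit by an inequality-plus-mass argument.

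\textbf{Lower bound.} Write $\mu_j=(\ddc f_j)^{d_j}$. Show $(\ddc F)^N\ge\bigotimes_j\mu_j$ on $\{F=-\ell\}$, either by the approximation above or by the comparison principle applied to $\max_j(f_j+\varepsilon_j)$.

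\textbf{Upper bound on total mass.} Compute $\int(\ddc F)^N$ exactly. Use $F\ge\sum$ style comparison, or better the identity
\[
\int(\ddc F)^N=\frac{1}{\ell}\int(-F)(\ddc F)^N.
\]
This identity holds because the measure lives on $\{F=-\ell\}$ by step (i). Then integrate by parts against the functions $\max_j f_j/\ell$ factorwise, which yields $\prod_j\mu_j(D_j)$.

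\textbf{Conclusion.} Equal total masses together with the inequality of positive measures give \eqref{product-measure}, and the finite mass completes the proof that $F\in\E_0$.
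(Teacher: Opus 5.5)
Your proposal has a genuine gap at its core. You never prove that $(\ddc F)^N$ coincides with $\bigotimes_j\mu_j$ on $\{F=-\ell\}$ with coefficient exactly one, and the heuristic you give for it is wrong.

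Write $\ddc M_\varepsilon=\sum_j\partial_jM_\varepsilon\,\ddc f_j+\sum_{i,j}\partial_{ij}M_\varepsilon\,df_i\wedge d^cf_j$. In the expansion of $(\ddc M_\varepsilon)^N$ the pure term is not the only survivor. The gradient part has rank at most $k-1$, but it combines with factors $\ddc f_j$ to give positive top-degree terms. Already for $k=2$, $d_1=d_2=1$,
\begin{align*}
(\ddc M_\varepsilon)^2&=2\,\partial_1M_\varepsilon\,\partial_2M_\varepsilon\,\ddc f_1\wedge\ddc f_2\\
&\quad+2\,\partial_1M_\varepsilon\,\partial_{22}M_\varepsilon\,\ddc f_1\wedge df_2\wedge d^cf_2+2\,\partial_2M_\varepsilon\,\partial_{11}M_\varepsilon\,\ddc f_2\wedge df_1\wedge d^cf_1 .
\end{align*}
For the usual regularized maximum, $\partial_jM_\varepsilon\ge0$ and $\sum_j\partial_jM_\varepsilon=1$. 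Hence the coefficient $\frac{N!}{\prod_jd_j!}\prod_j(\partial_jM_\varepsilon)^{d_j}$ of the pure term is at most the multinomial probability $\frac{N!}{\prod_jd_j!}\prod_j(d_j/N)^{d_j}<1$. In the example it is at most $1/2$, so it can never tend to $1$. At least a fixed proportion of $\bigotimes_j\mu_j$ must come from the terms containing $\partial_{ij}M_\varepsilon$, which concentrate on the tie set.

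Consequently, the lower bound $(\ddc F)^N\ge\bigotimes_j\mu_j$ cannot be obtained by discarding positive terms. It requires exactly the delicate limit you hoped to sidestep, and ``the comparison principle applied to $\max_j(f_j+\varepsilon_j)$'' does not supply it.

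The mass count is also unsupported: nothing shows that factorwise integration by parts yields $\prod_j\mu_j(D_j)$. Moreover, you defer finiteness of $\int(\ddc F)^N$, and hence $F\in\E_0$, to this computation, yet integration by parts in $\E_0$ presupposes that membership. The alternative comparison with $\max_jA\rho_j(z_j)$ just raises the same finiteness question for another maximum.

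Step (i) has a second gap. At a point where all $f_j$ tie at a common value above $-\ell$, $F$ depends on every block, so locality gives nothing. You need that a maximum of maximal plurisubharmonic functions in separate variables is maximal. This is not formal: without the support hypothesis the tie set is charged, as $\max\{|z|^2,|w|^2\}$ shows. Also, for $d_j\ge2$ continuous maximal functions need not be smooth, so the algebraic regularized-maximum computation does not apply to them directly.

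The paper avoids all of this by citing \cite{ACP10}. Corollary~2.1 there gives finite mass for $\max\{f_1,f_2\}$. Corollary~4.3 identifies $(\ddc\max\{f_1,f_2\})^{d_1+d_2}$ with $(\ddc f_1)^{d_1}\otimes(\ddc f_2)^{d_2}$ when that product measure charges no pluripolar set and is carried by $\{f_1=f_2=-\ell\}$. Induction on $k$ then finishes the proof. To close your argument, either invoke such a product theorem or prove the coefficient-one identity directly, accounting for the second-derivative terms.
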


\begin{proof}
For two factors, Corollary~2.1 of~\cite{ACP10} gives finite total mass for $F$. Since $F$ is bounded, continuous and zero on the boundary, it belongs to $\E_0$. The pullbacks of $f_1,f_2$ are locally bounded, and
\[
(\ddc f_1)^{d_1}\wedge(\ddc f_2)^{d_2}
=(\ddc f_1)^{d_1}\otimes(\ddc f_2)^{d_2}
\]
charges no pluripolar set and is carried by $\{f_1=f_2=-\ell\}$. Corollary~4.3 of~\cite{ACP10} therefore identifies it with $(\ddc F)^{d_1+d_2}$, with coefficient one under our normalization. Induction gives~\eqref{product-measure}, since the maximum of the first factors again has its measure carried by $\{-\ell\}$.
\end{proof}

\subsection{Minimality from the planar projection}

We now prove the criterion stated in the introduction. The point is that partial energy reduces a comparison on $D\times G$ to a one-dimensional comparison on the base $D$ and then to strict energy on almost every slice.

\begin{proof}[Proof of Theorem~\ref{thm:criterion-intro}]
Let $V\in\F(D\times G)$ satisfy $V\le U$ and have the same total Monge--Amp\`ere mass. Theorem~\ref{thm:partial} gives $P_U,P_V\in\F(D)$ and
\begin{equation}\label{equal-projected-masses}
\int_D\ddc P_V=\int_D\ddc P_U.
\end{equation}
Let
\[
N=\{P_U=-\infty\}\cup\{P_V=-\infty\}.
\]
This is a polar set. For every $z\in D\setminus N$, Theorem~\ref{thm:partial} gives both slice-energy identities and puts both slices in $\E_1(G)$. Since $V(z,\cdot)\le U(z,\cdot)$, Lemma~\ref{lem:strict-energy} yields $P_V(z)\le P_U(z)$. Thus the inequality holds almost everywhere on $D$. To obtain it at an arbitrary point $z$, average over discs centered at $z$ and let their radii tend to zero. These averages converge to the values of the subharmonic functions, with $-\infty$ allowed. Hence $P_V\le P_U$ everywhere.

Minimality of $P_U$ and~\eqref{equal-projected-masses} now give $P_V=P_U$. For each $z\in D\setminus N$, the two slice energies are therefore finite and equal. The strict part of Lemma~\ref{lem:strict-energy} implies $V(z,\cdot)=U(z,\cdot)$ on $G$. Since $N$ has zero planar Lebesgue measure, Fubini's theorem gives $V=U$ almost everywhere on $D\times G$. Plurisubharmonic functions are determined by their almost-everywhere equivalence classes, so $V=U$ everywhere.

Finally, if $\pi_*((\ddc U)^{q+1})$ is maximal, then $P_U$ is minimal by~\eqref{partial-pushforward} and~\cite[Proposition~5]{Ng18}. The first assertion applies.
\end{proof}

\section{The one-dimensional potential}\label{sec:planar}

We use the notation in~\eqref{ellipse}--\eqref{h-def}.

\begin{lemma}\label{lem:h}
The potential $h$ satisfies
\begin{equation}\label{h-properties}
h\in\E_0(E)\cap C(\overline E),\qquad
\ddc h=\lambda,\qquad
-c\le h<0\text{ on }E,
\end{equation}
and
\[
h=-c\text{ on }K,\qquad h=0\text{ on }\partial E.
\]
Moreover, $\lambda$ is maximal on $E$, and $h$ is minimal in $\F(E)$.
\end{lemma}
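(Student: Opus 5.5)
Throughout, $\zeta$ and $w$ are used as fresh complex variables, not the coordinates of Section~\ref{sec:hessian}. The plan is to compute $h$ explicitly with the Joukowski map, read off the regularity and boundary statements, and then prove minimality of $h$ directly. Maximality of $\lambda$ then follows from the one-variable equivalence~\cite[Proposition~4.11]{Be17}.

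\emph{Explicit formula.} Let $J(\zeta)=\tfrac12(\zeta+\zeta^{-1})$. It maps $\{|\zeta|>1\}$ conformally onto $\C\setminus K$. The circle $|\zeta|=2$ goes to the ellipse with semi-axes $\tfrac12(2+\tfrac12)=5/4$ and $\tfrac12(2-\tfrac12)=3/4$, which is $\partial E$. The arcsine law $\lambda$ is the equilibrium measure of $K=[-1,1]$, whose logarithmic capacity is $1/2$. Hence I will use the classical identity
\[
h(z)=\log\bigl|z+\sqrt{z^2-1}\bigr|-\log2,
\]
with the branch chosen so that $|z+\sqrt{z^2-1}|\ge1$. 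I will verify it by noting the following: the right-hand side is harmonic off $K$; it equals $\log|z|+O(1/|z|)$ at infinity, as $h$ does; and it is constant on $K$. Uniqueness of the equilibrium potential then identifies the two functions.

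From this formula, $h=-\log2=-c$ on $K$ and $h=\log2-\log2=0$ on $\partial E$. Also $h$ is continuous on $\C$ and harmonic on $E\setminus K$, and $\ddc h=\lambda$ under~\eqref{normalization}, since $\ddc\log|z-x|=\delta_x$. The maximum and minimum principles on $E\setminus K$ give $-c\le h<0$ on $E$. Because $h$ is bounded, negative, continuous on $\overline E$, zero on $\partial E$, and $\int_E\ddc h=1$, we get $h\in\E_0(E)\cap C(\overline E)$.

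\emph{Minimality of $h$.} Let $v\in\F(E)$ satisfy $v\le h$ and put $\nu=\ddc v$ with $\nu(E)=1$. Use $-h$ as a test function in~\cite[Proposition~2]{Ng18}; since $0\le -h\le c$, this gives
\[
c=\int_E(-h)\,d\lambda\le\int_E(-h)\,d\nu\le c\,\nu(E)=c .
\]
Hence $\int(c+h)\,d\nu=0$, and since $c+h\ge0$ vanishes exactly on $K$, the measure $\nu$ is carried by $K$.

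Integration by parts (valid since $h$ is bounded) then gives
\[
\int_E(-v)\,d\lambda=\int_E(-h)\,d\nu=c.
\]
On $K$ we have $-v\ge-h=c$, so $v=-c=h$ holds $\lambda$-almost everywhere. Now $\lambda=\ddc h$ does not charge polar sets, and $v\ge h$ holds $(\ddc h)$-almost everywhere. The domination principle in $\F$ (Cegrell; equivalently~\cite[Theorem~3.24]{Lu15} with $m=1$) then yields $v\ge h$ on $E$, so $v=h$. Thus $h$ is minimal in $\F(E)$, and~\cite[Proposition~4.11]{Be17} turns this into maximality of $\lambda$.

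\emph{Main obstacle.} The delicate point is this last step: passing from equality $\lambda$-a.e.\ to equality everywhere. I will check that the domination principle applies with $v$ merely in $\F$ and $h$ in $\E_0$. If that version is unavailable, I will instead argue from Lemma~\ref{lem:ordered-zero} after establishing $\int(h-v)\,\ddc v=0$. That equality would follow from $\nu$ being carried by $K$ together with $v=h=-c$ holding $\nu$-a.e.\ on $K$. The latter comes from upper semicontinuity of $v$, $v\le -c$ on $K$, and energy finiteness.
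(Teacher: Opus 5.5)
Your route differs from the paper's and can be made to work. However, the final step of your main line invokes the domination principle in the wrong direction, and only your fallback is correct.

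\textbf{The misapplied step.} The domination principle gives $v\ge h$ from a null-set condition for the measure of the function that is to be shown larger, namely $\nu(\{v<h\})=0$ with $\nu=\ddc v$. A condition with respect to $\ddc h=\lambda$ does not suffice. For instance, $v=\max\{2h,-c\}\in\E_0(E)$ satisfies $v\le h$ and $v=h$ on $K$, so $v\ge h$ holds $\lambda$-a.e.; yet $v<h$ on $E\setminus K$.

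\textbf{The repair.} Your fallback via Lemma~\ref{lem:ordered-zero} is right, but what it needs is not energy finiteness. It needs $v=-c$ at every point of $K$, because $\nu$ is only known to be carried by $K$ and could a priori sit on a $\lambda$-null subset of $K$. This follows from what you already have:
\begin{enumerate}
\item by upper semicontinuity, $\{v\ge -c\}$ is closed;
\item it contains $\lambda$-almost every point of $K$, and since $\lambda$ has positive density on $(-1,1)$, those points are dense in $K$;
\item hence $v\ge -c$ on $K$, while $v\le h=-c$ there.
\end{enumerate}
So $\int_E(h-v)\,d\nu=0$, and the second case of Lemma~\ref{lem:ordered-zero} (with $a=h\in\E_0(E)$, $b=v\in\F(E)$) gives $v=h$. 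With this step written out, your minimality proof is complete.

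\textbf{Comparison with the paper.} The paper goes in the opposite direction. It proves maximality of $\lambda$ directly. The negative, strictly subharmonic function $\psi=(\operatorname{Im}z)^2-1$ attains its minimum $-1$ on $E\cap\R\supset K$. Hence any $\eta\succeq\lambda$ of mass one satisfies $-1\le\int\psi\,d\eta\le\int\psi\,d\lambda=-1$, and the equality criterion \cite[Lemma~1]{Ng18} gives $\eta=\lambda$. Minimality of $h$ then follows from \cite[Proposition~5]{Ng18}.

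You instead prove minimality by hand. You use $h$ itself as the test function (it equals $-c$ on $\supp\lambda$ and exceeds $-c$ elsewhere), then integrate by parts. You then import maximality from the one-dimensional equivalence \cite[Proposition~4.11]{Be17}; the argument of Proposition~\ref{prop:linear} also transfers to $E$. Your argument exploits the equilibrium structure of $h$ and avoids the strict-subharmonicity equality lemma. The cost is an extra external input and the delicate final step above. The paper's argument takes two lines and is the same device reused for $\nu_0$ in Proposition~\ref{prop:limits}.

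Your identification of $h$ through the equilibrium measure of $K$ and $\operatorname{cap}(K)=1/2$ is fine. It does rest on classical facts that the paper instead verifies directly, via the Joukowski factorization and circle means.
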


\begin{proof}
The change of variables $x=\cos\theta$ shows that $\lambda(K)=1$ and
\[
h(z)=\frac1\pi\int_0^\pi\log|z-\cos\theta|\,d\theta.
\]
For $z\notin K$, put $\zeta=z+\sqrt{z^2-1}$, where the branch is chosen so that $\sqrt{z^2-1}\sim z$ at infinity. Then $|\zeta|>1$, $z=(\zeta+\zeta^{-1})/2$, and
\[
z-\cos\theta=\frac{(\zeta-e^{i\theta})(\zeta-e^{-i\theta})}{2\zeta}.
\]
The circle mean of $\log|\zeta-e^{i\theta}|$ equals $\log|\zeta|$. It follows that
\begin{equation}\label{explicit-h}
h(z)=
\begin{cases}
\log|z+\sqrt{z^2-1}|-\log2,&z\notin K,\\
-\log2,&z\in K.
\end{cases}
\end{equation}
The value on $K$ follows from the same integrable logarithmic expression on $|\zeta|=1$. The formula is continuous across $K$, including its endpoints.

The Joukowski map sends $|\zeta|=2$ onto $\partial E$, since
\[
\frac{2e^{i\theta}+(2e^{i\theta})^{-1}}2
=\frac54\cos\theta+i\frac34\sin\theta.
\]
Thus~\eqref{explicit-h} gives $h=0$ on $\partial E$ and $-c\le h<0$ on $E$. The distributional identity $\ddc\log|z-x|=\delta_x$ gives $\ddc h=\lambda$. Hence $h\in\E_0(E)$.

To prove maximality, suppose that $\eta\succeq\lambda$ and $\eta(E)=1$. By~\cite[Remark~2(1)]{Ng18}, the order inequality extends to all negative subharmonic test functions. Consider
\[
\psi(z)=(\operatorname{Im}z)^2-1.
\]
This is negative and strictly subharmonic on $E$, $\psi\ge-1$, and $\psi=-1$ precisely on $E\cap\R$, which contains $K$. Therefore
\[
-1\le\int_E\psi\,d\eta
\le\int_E\psi\,d\lambda=-1.
\]
The equality criterion~\cite[Lemma~1]{Ng18}, applied to the strictly subharmonic test $\psi$, gives $\eta=\lambda$. Hence $\lambda$ is maximal, and~\cite[Proposition~5]{Ng18} implies minimality of $h$.
\end{proof}

\section{The family of counterexamples}\label{sec:family}

\begin{proof}[Proof of Theorem~\ref{thm:family}]
The domain $\Omega$ is hyperconvex. Indeed, the maximum of a negative continuous subharmonic exhaustion of $E$ and the functions $|w_j|^2-1$ is a bounded continuous psh exhaustion of the product.

Fix $t>0$, and write
\begin{equation}\label{coefficients}
\alpha=t^{q/n},\qquad \gamma=t^{-1/n},\qquad
\ell=c\alpha,\qquad L=ct,\qquad r=e^{-L}=2^{-t}.
\end{equation}
Then $\alpha\gamma^q=1$ and $\gamma L=\ell$. Define
\[
f_1(z)=\alpha h(z),\qquad
f_j(w_j)=\gamma\max\{\log|w_j|,-L\},\quad 2\le j\le n.
\]
All these functions are continuous on the closures of their respective factors, belong to $\E_0$ there, and have minimum $-\ell$. Furthermore,
\[
\ddc f_1=\alpha\lambda,\qquad
\ddc f_j=\gamma\sigma_r\quad(2\le j\le n).
\]
The first measure is carried by $K=\{f_1=-\ell\}$; the others are carried by $\{|w_j|=r\}\subset\{f_j=-\ell\}=\{|w_j|\le r\}$.

Since $f_1\ge-\ell$, truncating the logarithms does not change their maximum with $f_1$. Hence $U_t=\max\{f_1,\ldots,f_n\}$. Lemma~\ref{lem:product} gives
\begin{equation}\label{MA-computation}
(\ddc U_t)^n
=\alpha\gamma^q\lambda\otimes\sigma_r^{\otimes q}
=\lambda\otimes\sigma_r^{\otimes q}.
\end{equation}
It also gives $U_t\in\E_0(\Omega)\cap C(\overline\Omega)$. The boundary value is zero, and the stated bounds follow from the bounds on the factors. This proves~\textup{(i)} and the measure identity in~\textup{(ii)}.

We calculate the partial energy explicitly. For fixed $z\in E$, put $a=\alpha h(z)<0$. The slice is
\[
U_t(z,w)=\max\{a,\gamma\log|w_2|,\ldots,\gamma\log|w_n|\}.
\]
Applying Lemma~\ref{lem:product} to the truncated logarithms on this slice yields
\[
\bigl(\ddc_w U_t(z,\cdot)\bigr)^q
=\gamma^q\sigma_{\exp(a/\gamma)}^{\otimes q}.
\]
The slice equals $a$ on the support of this measure. Therefore
\begin{equation}\label{P-equals-h}
P_{U_t}(z)=a\gamma^q
=\alpha\gamma^q h(z)=h(z).
\end{equation}
Lemma~\ref{lem:h} and Theorem~\ref{thm:criterion-intro} prove that $U_t$ is minimal in $\F(\Omega)$. Equivalently, one can apply the marginal assertion of that theorem directly to~\eqref{MA-computation}.

To prove~\textup{(iii)}, take $0<t_1<t_2$ and set $r_j=2^{-t_j}$, so that $r_2<r_1$. For $\varphi\in\E_0(\Omega)\cap C(\Omega)$ and fixed $x\in K$, the function $w\mapsto\varphi(x,w)$ is bounded and psh. Its circle mean in each variable is nondecreasing with the radius. Iterating this inequality and then integrating against $\lambda$ gives
\begin{align*}
\int_\Omega\varphi\,d\mu_{t_2}
&=\int_K\int_{\D^q}\varphi(x,w)\,d\sigma_{r_2}^{\otimes q}(w)\,d\lambda(x)\\
&\le\int_K\int_{\D^q}\varphi(x,w)\,d\sigma_{r_1}^{\otimes q}(w)\,d\lambda(x)
=\int_\Omega\varphi\,d\mu_{t_1}.
\end{align*}
Thus $\mu_{t_2}\succeq\mu_{t_1}$. They are carried by $K\times\{|w_j|=r_2\text{ for all }j\}$ and $K\times\{|w_j|=r_1\text{ for all }j\}$, respectively. These sets are disjoint, so the two probability measures are distinct. Since each $U_t$ is bounded, every $\mu_t$ charges no pluripolar set.

If two distinct members were ordered pointwise, minimality of the larger member and equality of total mass would force equality of the functions. Their distinct Monge--Amp\`ere measures rule this out.

For a compact interval $J\Subset(0,\infty)$, take $M\ge c\max J$ and put $\ell_M(\zeta)=\max\{\log|\zeta|,-M\}$. Since $h\ge-c$ and $\alpha/\gamma=t$, one has
\[
U_t=\max\{\alpha(t)h,\gamma(t)\ell_M(w_2),\ldots,\gamma(t)\ell_M(w_n)\}
\quad(t\in J).
\]
Consequently,
\[
\|U_t-U_s\|_\infty
\le\max\{c|\alpha(t)-\alpha(s)|,M|\gamma(t)-\gamma(s)|\},
\qquad s,t\in J,
\]
which proves~\textup{(iv)}.
\end{proof}

\begin{corollary}\label{cor:nearby}
For every $t_0>0$ and $\varepsilon>0$, there exists a minimal function $V\in\E_0(\Omega)\cap C(\overline\Omega)$ such that
\[
\|V-U_{t_0}\|_{C(\overline\Omega)}<\varepsilon,
\qquad (\ddc V)^n\succeq(\ddc U_{t_0})^n,
\qquad (\ddc V)^n\ne(\ddc U_{t_0})^n,
\]
and both measures have total mass one.
\end{corollary}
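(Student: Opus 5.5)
The plan is to take $V=U_t$ for a parameter $t>t_0$ chosen close to $t_0$, and read off every required property from Theorem~\ref{thm:family}.

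First, part~(iv) of Theorem~\ref{thm:family} says that $t\mapsto U_t$ is continuous into $C(\overline\Omega)$ with the uniform norm. Continuity at $t_0$ gives $\delta>0$ such that
\[
\|U_t-U_{t_0}\|_{C(\overline\Omega)}<\varepsilon
\quad\text{whenever } |t-t_0|<\delta .
\]
Concretely, on $J=[t_0,t_0+1]$ the explicit bound $\max\{c|\alpha(t)-\alpha(t_0)|,M|\gamma(t)-\gamma(t_0)|\}$ from the proof of~(iv) tends to zero. This is because $\alpha(t)=t^{q/n}$ and $\gamma(t)=t^{-1/n}$ are continuous. Fix some $t\in(t_0,t_0+\delta)$ and set $V=U_t$.

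Second, parts~(i) and~(ii) give $V\in\E_0(\Omega)\cap C(\overline\Omega)$, minimality of $V$ in $\F(\Omega)$, and $(\ddc V)^n=\mu_t$ with $\mu_t(\Omega)=1=\mu_{t_0}(\Omega)$. Third, part~(iii) with $t_1=t_0<t_2=t$ gives
\[
(\ddc V)^n=\mu_t\succeq\mu_{t_0}=(\ddc U_{t_0})^n,
\qquad \mu_t\ne\mu_{t_0}.
\]

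The only point requiring care is the direction of the order. It is the larger parameter, that is, the smaller radius $2^{-t}$, that dominates, so $t$ must be taken above $t_0$ rather than on either side. Beyond this there is no real obstacle: the corollary is a direct assembly of parts~(i)--(iv).
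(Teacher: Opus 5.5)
Your proposal is correct and is exactly the paper's argument: take $V=U_t$ with $t>t_0$ sufficiently close to $t_0$, and read off membership, minimality and unit mass from parts (i)--(ii), the strict domination from part (iii), and uniform closeness from part (iv) of Theorem~\ref{thm:family}. Your remark on the direction of the order, namely that the larger parameter (smaller radius $2^{-t}$) dominates, is the one point the paper's one-line proof leaves implicit, and you have it right.
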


\begin{proof}
Take $V=U_t$ with $t>t_0$ sufficiently close to $t_0$.
\end{proof}

\section{Limits and restrictions on dominating measures}\label{sec:limits}

The following proposition records the behavior of the same family as the radii shrink to zero.

\begin{proposition}\label{prop:limits}
As $t\to\infty$, one has
\begin{equation}\label{Lp-limit}
U_t\longrightarrow0\quad\text{in }L^p(\Omega,dV)
\quad\text{for every }0<p<\infty,
\end{equation}
while
\begin{equation}\label{measure-limit}
(\ddc U_t)^n\rightharpoonup
\nu_0:=\lambda\otimes\delta_0^{\otimes q}.
\end{equation}
The measure $\nu_0$ is maximal. In addition,
\begin{equation}\label{energy-divergence}
\int_\Omega(-U_t)(\ddc U_t)^n=c\,t^{q/n}\longrightarrow\infty.
\end{equation}
\end{proposition}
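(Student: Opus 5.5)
The proof has four parts: the $L^p$ limit, the weak limit of measures, maximality of $\nu_0$, and the energy formula. The energy formula is the quickest. By \eqref{MA-computation}, $\mu_t=\lambda\otimes\sigma_r^{\otimes q}$ with $r=2^{-t}$. On its support, $h=-c$ on $K$ and $\gamma\log|w_j|=\gamma\log r=-\gamma L=-\ell$, so $U_t=-\ell=-c\,t^{q/n}$ there. Since $\mu_t$ has mass one, $\int(-U_t)\,d\mu_t=c\,t^{q/n}$, which gives \eqref{energy-divergence}.

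For the weak limit \eqref{measure-limit}, note that $\sigma_{2^{-t}}\rightharpoonup\delta_0$ as $t\to\infty$. Tensor products of weakly convergent probability measures converge weakly, which can be checked on product test functions and then extended by Stone--Weierstrass on compact sets. Since $\lambda$ is fixed, $\mu_t\rightharpoonup\lambda\otimes\delta_0^{\otimes q}$.

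For \eqref{Lp-limit}, use the bound $|U_t|\le c\,t^{q/n}$ together with pointwise convergence on most of $\Omega$. Fix $\delta>0$ and work on the set $\{|w_j|>\delta\text{ for some }j\}$. There
\[
U_t\ge t^{-1/n}\log|w_j|\ge t^{-1/n}\log\delta\to0,
\]
and also $U_t<0$, so $U_t\to0$ uniformly on this set. The complementary set $\{|w|_\infty\le\delta\}$ has volume $O(\delta^{2q})$. On it we only have the bound $|U_t|\le c\,t^{q/n}$, which grows, so this crude estimate fails and a better one is needed.

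The better estimate uses the slice $\{|w_j|\le\delta\ \forall j\}$ more carefully. There $U_t\ge\max_j t^{-1/n}\log|w_j|$, and this bound is smaller than $-c\,t^{q/n}$ only when every $|w_j|<r$. So
\[
|U_t|\le\min\bigl\{c\,t^{q/n},\ t^{-1/n}\max_j|\log|w_j||\bigr\},
\]
where the maximum is taken over the coordinates appearing in the first bound. It therefore suffices to show that $t^{-p/n}\int|\log|w_2||^p\,dV\to0$, together with the contribution $(c\,t^{q/n})^p\,\vol\{|w_j|<2^{-t}\ \forall j\}\approx t^{pq/n}\,4^{-tq}\to0$. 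The first integral is finite because $\log|w|$ is in $L^p$ locally, so that term decays like $t^{-p/n}$. Dominated convergence, with the majorant $|\log|w_2||^p+\dots$, then finishes the argument cleanly. The main obstacle here is just arranging this majorant properly; it is routine.

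Maximality of $\nu_0$ is the substantive step, and I would mimic Lemma~\ref{lem:h}. Suppose $\eta\succeq\nu_0$ and $\eta(\Omega)=1$. Extend the order to negative psh test functions by \cite[Remark~2(1)]{Ng18}. Take the test function
\[
\psi=(\operatorname{Im}z)^2+|w|^2-C,
\]
with $C$ chosen so that $\psi<0$ on $\Omega$; since $|\operatorname{Im} z|<3/4$ and $|w|^2<q$, the choice $C=q+1$ works. This function is strictly psh. It attains its minimum $-C$ exactly on $(E\cap\R)\times\{0\}$, which carries $\nu_0$. The order then gives
\[
-C\le\int\psi\,d\eta\le\int\psi\,d\nu_0=-C.
\]
Hence $\eta$ is carried by the minimum set, and the equality criterion \cite[Lemma~1]{Ng18} for a strictly psh test function yields $\eta=\nu_0$, exactly as in the planar case.
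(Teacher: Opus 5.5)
Your proposal is correct and follows essentially the same route as the paper: the energy and weak-limit statements come from the explicit product measure $\lambda\otimes\sigma_{2^{-t}}^{\otimes q}$, and maximality of $\nu_0$ uses the same strictly psh test function $(\operatorname{Im}z)^2+|w|^2-n$ together with \cite[Lemma~1]{Ng18}. In the $L^p$ step, splitting off $\{|w_j|<2^{-t}\ \forall j\}$ and invoking dominated convergence are unnecessary: the pointwise bound $0\le -U_t\le t^{-1/n}\bigl|\log|w_2|\bigr|$ alone gives $\|U_t\|_{L^p}^p\le t^{-p/n}\vol(E)\int_{\D^q}\bigl|\log|w_2|\bigr|^p\,dV\to0$, which is exactly the paper's argument.
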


\begin{proof}
Set $g(w)=\max_{2\le j\le n}\log|w_j|$. Since $U_t\ge t^{-1/n}g$, we have, almost everywhere,
\[
0\le-U_t(z,w)\le t^{-1/n}(-g(w)).
\]
For every $p>0$, $g\in L^p(\D^q)$. For instance,
\[
\vol\{w\in\D^q:-g(w)>\tau\}=\pi^q e^{-2q\tau},\qquad \tau>0,
\]
and the layer-cake formula gives
\[
\int_{\D^q}(-g)^p\,dV
=\frac{\pi^q\Gamma(p+1)}{(2q)^p}<\infty.
\]
This proves~\eqref{Lp-limit}, with $\|U_t\|_{L^p}\le C_{p,n,E}t^{-1/n}$ for $p\ge1$, and the corresponding integral estimate for $0<p<1$.

Since $\sigma_{2^{-t}}\rightharpoonup\delta_0$, the explicit product formula proves~\eqref{measure-limit}. On the support of $(\ddc U_t)^n$, the value of $U_t$ is $-c\,t^{q/n}$. Its measure has mass one, so~\eqref{energy-divergence} follows.

For maximality of $\nu_0$, consider the negative strictly psh function
\[
\Psi(z,w)=(\operatorname{Im}z)^2+
\sum_{j=2}^n|w_j|^2-n.
\]
It satisfies $\Psi\ge-n$ on $\Omega$ and $\Psi=-n$ on $K\times\{0\}^q$. Suppose that $\eta\succeq\nu_0$ and $\eta(\Omega)=1$. The extension of the order to negative psh functions gives
\[
-n\le\int_\Omega\Psi\,d\eta
\le\int_\Omega\Psi\,d\nu_0=-n.
\]
Since $\Psi$ is strictly psh,~\cite[Lemma~1]{Ng18} gives $\eta=\nu_0$.
\end{proof}

\begin{proposition}\label{prop:marginal-fixed}
Fix $t>0$. If $\eta$ is a finite positive measure on $\Omega$ with $\eta\succeq\mu_t$ and $\eta(\Omega)=1$, then
\[
\pi_*\eta=\lambda.
\]
Consequently, $\eta$ is carried by $K\times\D^q$ and is singular with respect to $2n$-dimensional Lebesgue measure. In particular, no such competing measure is absolutely continuous with respect to volume; this also excludes Monge--Amp\`ere measures of $C^2$ psh functions.
\end{proposition}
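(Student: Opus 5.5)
The natural route is to push the order relation down to the base $E$ and then use maximality of $\lambda$ from Lemma~\ref{lem:h}. If $\varphi$ is a negative subharmonic test function on $E$, the function $\Phi(z,w)=\varphi(z)$ is negative and psh on $\Omega$. By~\cite[Remark~2(1)]{Ng18}, the order $\eta\succeq\mu_t$ extends to all negative psh test functions. Hence
\[
\int_E(-\varphi)\,d(\pi_*\eta)=\int_\Omega(-\Phi)\,d\eta
\ge\int_\Omega(-\Phi)\,d\mu_t=\int_E(-\varphi)\,d\lambda,
\]
where the last equality uses $\pi_*\mu_t=\lambda$, which follows from~\eqref{mut} since $\sigma_r^{\otimes q}$ is a probability measure. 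Applying this with continuous $\varphi\in\E_0(E)\cap C(E)$ gives $\pi_*\eta\succeq\lambda$ on $E$. Moreover, $(\pi_*\eta)(E)=\eta(\Omega)=1=\lambda(E)$.

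Maximality of $\lambda$ (Lemma~\ref{lem:h}) then yields $\pi_*\eta=\lambda$. If I prefer not to rely on the extension remark for the pulled-back test functions, there is a direct alternative. Test with $\Psi(z,w)=(\operatorname{Im}z)^2-1$, which is negative and psh on $\Omega$ and equals $-1$ exactly on $(E\cap\R)\times\D^q$. This gives $\int\Psi\,d\eta\le -1$, and since $\Psi\ge-1$, $\eta$ must be carried by $(E\cap\R)\times\D^q$. I would still go through the marginal argument, since it gives the stronger statement $\pi_*\eta=\lambda$.

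The consequences are then routine. Since $\lambda(E\setminus K)=0$, we get $\eta((E\setminus K)\times\D^q)=(\pi_*\eta)(E\setminus K)=0$, so $\eta$ is carried by $K\times\D^q$. This set has zero $2n$-dimensional Lebesgue measure because $K\subset\R$ is Lebesgue-null in $\C$, so $\eta$ is singular. In particular $\eta$ is not absolutely continuous with respect to volume. This excludes $(\ddc v)^n$ for $v$ of class $C^2$, which is a continuous density times $dV$, since a nonzero such measure is absolutely continuous.

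The only delicate step is the first one: justifying that pulled-back test functions $\varphi\circ\pi$ are admissible. They lie in neither $\E_0(\Omega)$ nor $C(\Omega)$ with zero boundary values on all of $\partial\Omega$. I expect this to be handled by~\cite[Remark~2(1)]{Ng18}, exactly as in the proofs of Lemma~\ref{lem:h} and Proposition~\ref{prop:limits}, where non-$\E_0$ negative psh tests were already used.
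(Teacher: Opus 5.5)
Your proposal is correct and follows the paper's proof essentially verbatim: you pull back negative subharmonic tests via $\pi$ and admit them through \cite[Remark~2(1)]{Ng18}, obtain $\pi_*\eta\succeq\lambda$ with equal mass, apply the maximality of $\lambda$ from Lemma~\ref{lem:h}, and then read off the support and singularity consequences. One small aside, which does not affect your main argument: your ``direct alternative'' with $\Psi=(\operatorname{Im}z)^2-1$ does not actually avoid the extension remark, since $\Psi\notin\E_0(\Omega)$ either; it would still need a truncation and monotone limit, as with $\psi_A$ in the proof of Proposition~\ref{prop:hyperplane}.
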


\begin{proof}
For any negative subharmonic function $\chi$ on $E$, the function $(z,w)\mapsto\chi(z)$ is negative and psh on $\Omega$. By~\cite[Remark~2(1)]{Ng18}, it may be used in the order inequality. Hence
\[
\int_E(-\chi)\,d(\pi_*\eta)
\ge\int_E(-\chi)\,d\lambda.
\]
Both measures have mass one. The maximality of $\lambda$ yields $\pi_*\eta=\lambda$. Therefore $\eta((E\setminus K)\times\D^q)=0$. The remaining set has zero Lebesgue measure. The Monge--Amp\`ere measure of a $C^2$ psh function has a continuous density with respect to Lebesgue measure, and so cannot equal such a measure of mass one.
\end{proof}

\noindent\textbf{Acknowledgements}
This work was completed while the first author was visiting the Department of Mathematics at the National University of Singapore as a Visiting Senior Research Fellow under the Singapore Academies Southeast Asia Fellowship (SASEAF) Programme, within the project “Math, Sobolev Space” (Grant No. E-146-00-0039-01). The first author would like to express his sincere gratitude to the Department of Mathematics at NUS for its warm hospitality and excellent working environment. He also gratefully acknowledges the Singapore National Academy of Science (SNAS) and the National Research Foundation (NRF), Singapore, for their financial support through the SASEAF Programme.

\end{document}